\documentclass[12pt]{article}

\usepackage{amssymb,amsmath, amsthm,latexsym, verbatim, amscd}
\usepackage{color}

\usepackage{geometry}
\usepackage[all]{xy}
\usepackage{graphicx}
\usepackage{xr}

\makeatletter

\makeatother

\theoremstyle{plain}
\newtheorem{theorem}{Theorem}[section]
\newtheorem{lemma}[theorem]{Lemma}
\newtheorem{corollary}[theorem]{Corollary}
\newtheorem{proposition}[theorem]{Proposition}

\theoremstyle{definition}

\newtheorem{example}[theorem]{Example}

\theoremstyle{remark}
\newtheorem{remark}{Remark}

\usepackage{hyperref}

\usepackage{tikz}
\usetikzlibrary{shapes,arrows,patterns}

\numberwithin{equation}{section}

\begin{document}

\title{Double Gegenbauer expansion of ${\left|s{-} t\right|}^\alpha$}

\author{
T.~Kobayashi
\thanks{CONTACT T.~Kobayashi. Email: toshi@ms.u-tokyo.ac.jp}
\thanks{Graduate School of Mathematical Sciences,
	The University of Tokyo
     and Kavli Institute for the Physics and Mathematics of the Universe (WPI)}
and 
A.~Leontiev
\thanks{Graduate School of Mathematical Sciences,
	The University of Tokyo
}
}

\maketitle

\begin{abstract}
	  Motivated by the study of symmetry breaking operators for indefinite
  orthogonal groups, we give a Gegenbauer expansion of the two variable
  function $| s {-} t |^{\alpha}$ in terms of the ultraspherical polynomials
  $C_{\ell}^{\lambda} (s)$ and $C^{\mu}_m (t)$. 
  Generalization, specialization,
 and limits of the expansion are also discussed.
\end{abstract}

{\bf{KEYWORDS}}\enspace

Gegenbauer polynomial; Sobolev inequality; Hermite polynomial; Selberg integral

\vskip 0.5pc
{\bf{AMS CLASSIFICATION}}

{2010 MSC. Primary 42C05; Secondary 33C45, 33C05, 53C35, 22E46.}

\section{Main results}
Let $C_{\ell}^{\lambda} (s)$ be the Gegenbauer polynomial of degree $\ell$. 
In this article, we give an expansion of the power ${\left|s{-} t\right|}^\alpha$ by two Gegenbauer polynomials $C^\lambda_\ell(s)$ and $C^\mu_m(t)$ with independent parameters $\lambda$ and $\mu$.
	
For $\ell,m\in\mathbb{N}$, we set
\begin{equation}
     \displaystyle 
{{b^{\lambda,\mu,\nu}_{\ell,m}}} :=\displaystyle \frac{{{(-1)^{m}}} (\lambda + \ell)
    (\mu + m) \Gamma (\lambda + \mu + 2 \nu + 1) \Gamma (\lambda) \Gamma (\mu)
    \Gamma (2 \nu + 1)}{2^{ 2 \nu}
    \displaystyle\prod_{\delta,\varepsilon\in\{\pm1\}}\Gamma\left( 
\nu+1+\frac{\lambda+\mu}{2}+\delta\frac{\lambda+\ell}{2}+\varepsilon
\frac{\mu+m}{2}
\right)}.
    \label{eqn:alm}
\end{equation}
\begin{theorem}
[{cf. Kobayashi--Mano \cite[Lem.~7.9.1]{kobayashi2011schrodinger}}]
	\label{thm:1-1}
	Let $\lambda,\mu$ be positive numbers and $\nu\in\mathbb{C}$ satisfying $2\operatorname{Re}\, \nu>\lambda+\mu+4$.
	For $\varepsilon=0,1$, we have an expansion
  \begin{equation}
		  | s {-} t |^{2 \nu}{\operatorname{sgn}}^\varepsilon(s{-} t) = \displaystyle\sum_{\scalebox{0.6}{$\begin{array}[]{c}
			  \ell,m\in\mathbb{N}\\
			  \ell+m\equiv\varepsilon{\,\operatorname{mod}\,2}
		  \end{array}$}} {{b^{\lambda,\mu,\nu}_{\ell,m}}} C_{\ell}^{\lambda}
    (s) C_m^{\mu} (t),
	  \label{eqn:s+t}
  \end{equation}
  where the right-hand side converges absolutely and uniformly in $[-1,1]^2$.
\end{theorem}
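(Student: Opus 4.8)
The plan is to establish the identity by matching Fourier--Gegenbauer coefficients. Since $\operatorname{Re}\nu>0$, the left-hand side $f_\nu(s,t):=|s-t|^{2\nu}\operatorname{sgn}^\varepsilon(s-t)$ is bounded on $[-1,1]^2$, hence lies in $\mathcal H:=L^2\big([-1,1]^2;(1-s^2)^{\lambda-\frac12}(1-t^2)^{\mu-\frac12}\,ds\,dt\big)$, in which the products $C_\ell^\lambda(s)C_m^\mu(t)$ ($\ell,m\in\mathbb N$) form a complete orthogonal system (Fubini together with one-variable Gegenbauer orthogonality gives orthogonality, and the Weierstrass theorem gives completeness). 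It therefore suffices to show (i) that the right-hand series converges absolutely and uniformly on $[-1,1]^2$, so that it defines a continuous function $g$ whose coefficient against $C_\ell^\lambda\otimes C_m^\mu$ is obtained by interchanging sum and integral, and (ii) that the coefficient of $C_\ell^\lambda\otimes C_m^\mu$ in $f_\nu$ equals $b^{\lambda,\mu,\nu}_{\ell,m}h_\ell^\lambda h_m^\mu$ when $\ell+m\equiv\varepsilon\pmod2$ and vanishes otherwise, where $h_\ell^\lambda:=\int_{-1}^1 C_\ell^\lambda(x)^2(1-x^2)^{\lambda-\frac12}\,dx=\frac{\pi\,2^{1-2\lambda}\Gamma(\ell+2\lambda)}{\ell!\,(\ell+\lambda)\,\Gamma(\lambda)^2}$. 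The vanishing when $\ell+m\not\equiv\varepsilon$ is immediate from the substitution $(s,t)\mapsto(-s,-t)$, which multiplies $f_\nu$ by $(-1)^\varepsilon$ and $C_\ell^\lambda\otimes C_m^\mu$ by $(-1)^{\ell+m}$ while fixing the weight. Granting (i) and (ii), $f_\nu$ and $g$ have identical Gegenbauer coefficients, so $f_\nu=g$ in $\mathcal H$, hence a.e., hence everywhere by continuity.

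For (ii) with $\ell+m\equiv\varepsilon$, assume first that $2\operatorname{Re}\nu>\ell+m$, which makes $f_\nu$ of class $C^{\ell+m}$ and legitimizes what follows. Insert the Rodrigues formula $C_n^\lambda(x)(1-x^2)^{\lambda-\frac12}=K_n^\lambda\,\frac{d^n}{dx^n}(1-x^2)^{n+\lambda-\frac12}$ (with $K_n^\lambda$ an explicit constant) into $\langle f_\nu,C_\ell^\lambda\otimes C_m^\mu\rangle$ and integrate by parts $\ell$ times in $s$ and $m$ times in $t$; the boundary terms vanish because every derivative of order $<\ell$ of $(1-x^2)^{\ell+\lambda-\frac12}$ still carries a positive power of $1-x^2$ and so vanishes at $x=\pm1$. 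Since $f_\nu$ depends only on $s-t$, carrying out the $\ell+m$ derivatives yields a constant multiple of $\frac{\Gamma(2\nu+1)}{\Gamma(2\nu+1-\ell-m)}|s-t|^{2\nu-\ell-m}$, the sign power $\operatorname{sgn}^{\varepsilon+\ell+m}$ collapsing to $1$ by the parity hypothesis. Thus the pairing equals an explicit constant times the Selberg-type integral
\[
\int_{-1}^1\!\!\int_{-1}^1 (1-s^2)^{\ell+\lambda-\frac12}\,(1-t^2)^{m+\mu-\frac12}\,|s-t|^{2\nu-\ell-m}\,ds\,dt ,
\]
which admits a closed evaluation as a quotient of Gamma functions in the parameters $\alpha=\ell+\lambda$, $\beta=m+\mu$, $\gamma=\nu-\tfrac{\ell+m}{2}$; indeed the four $\Gamma$-factors in the denominator of \eqref{eqn:alm} are exactly $\Gamma(\alpha+\beta+\gamma+1)$, $\Gamma(\alpha+\gamma+1)$, $\Gamma(\beta+\gamma+1)$, $\Gamma(\gamma+1)$. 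Assembling all the Gamma factors (from $K_\ell^\lambda$, $K_m^\mu$, the differentiation of $|\,\cdot\,|^{2\nu}$, the double integral, and the norms $h_\ell^\lambda,h_m^\mu$) and simplifying produces precisely $b^{\lambda,\mu,\nu}_{\ell,m}h_\ell^\lambda h_m^\mu$. For $\nu$ merely satisfying the hypothesis of the theorem, one removes the restriction $2\operatorname{Re}\nu>\ell+m$ by analytic continuation: for each fixed $\ell,m$ the pairing $\langle f_\nu,C_\ell^\lambda\otimes C_m^\mu\rangle$ is holomorphic in $\nu$ on $\operatorname{Re}\nu>-\tfrac12$, the closed-form expression is meromorphic there, and the two agree on the nonempty open set $2\operatorname{Re}\nu>\ell+m$.

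For (i) I would estimate the coefficients directly. Applying Stirling's formula to the four Gamma functions in the denominator of $b^{\lambda,\mu,\nu}_{\ell,m}$ shows that, as $\ell+m\to\infty$, $b^{\lambda,\mu,\nu}_{\ell,m}$ decays rapidly enough that $\sum_{\ell,m}|b^{\lambda,\mu,\nu}_{\ell,m}|\,\|C_\ell^\lambda\|_\infty\,\|C_m^\mu\|_\infty<\infty$ whenever $2\operatorname{Re}\nu>\lambda+\mu+4$; here $\|C_\ell^\lambda\|_{L^\infty[-1,1]}=O(\ell^{2\lambda-1})$, the bound being attained up to constants near $x=\pm1$, and the worst region for the sum is the near-diagonal $|\ell-m|=O(1)$, where two of the four $\Gamma$-arguments of \eqref{eqn:alm} stay bounded. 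The Weierstrass $M$-test then gives the asserted absolute and uniform convergence, which completes the argument.

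I expect the crux to lie in step (ii): evaluating the two-dimensional Selberg-type integral displayed above in closed form and carrying out the lengthy but mechanical Gamma-function bookkeeping that identifies the outcome as $b^{\lambda,\mu,\nu}_{\ell,m}h_\ell^\lambda h_m^\mu$, together with making the integration by parts and the analytic continuation fully rigorous near the diagonal $s=t$, where $f_\nu$ is only finitely differentiable. The coefficient estimate in step (i), and hence the convergence statement, is routine once \eqref{eqn:alm} is available.
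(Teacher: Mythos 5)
Your architecture --- Rodrigues formula plus integration by parts to compute each coefficient, parity to kill the wrong-parity terms, completeness of the products $C_\ell^\lambda\otimes C_m^\mu$ in the weighted $L^2$ space, and a coefficient estimate to get absolute and uniform convergence --- is sound and in fact runs parallel to the paper's. But the decisive step is asserted rather than proved: the closed evaluation of
\[
\int_{-1}^1\int_{-1}^1(1-s^2)^{\ell+\lambda-\frac12}\,(1-t^2)^{m+\mu-\frac12}\,|s-t|^{2\nu-\ell-m}\,ds\,dt
\]
as a quotient of Gamma functions. This is \emph{not} the classical Selberg integral: Selberg's formula forces the same exponents on both variables, i.e.\ it only covers $\ell+\lambda=m+\mu$, whereas here the two weights and the exponent on $|s-t|$ are three independent parameters. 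The general evaluation is exactly \eqref{eqn:lm0}, i.e.\ Proposition \ref{prop:2} at $x=1$, whose proof occupies Section \ref{sec:3} (Euler's integral representation, a quadratic transformation of ${}_2F_1$, the summation Lemma \ref{lem:Fisum}, and Gauss's evaluation at $1$). By writing ``admits a closed evaluation'' you have reduced the theorem to its hardest ingredient, not established it; you flag this yourself as the crux, but as written it is a genuine gap. (Your bookkeeping around it is consistent: with $\alpha=\ell+\lambda$, $\beta=m+\mu$, $\gamma=\nu-\frac{\ell+m}{2}$ the four denominator factors of \eqref{eqn:alm} are indeed $\Gamma(\gamma+1)$, $\Gamma(\alpha+\gamma+1)$, $\Gamma(\beta+\gamma+1)$, $\Gamma(\alpha+\beta+\gamma+1)$, and your analytic continuation in $\nu$ for fixed $\ell,m$ is legitimate and mirrors the paper's own continuation argument.)

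Where you genuinely diverge from the paper is the convergence step: the paper deduces absolute and uniform convergence from a Sobolev-type estimate for Gegenbauer expansions (Proposition \ref{prop:1718105}), applied to $|s-t|^{2\nu}$ through finitely many derivatives --- that is what the hypothesis $2\operatorname{Re}\nu>\lambda+\mu+4$ is tailored to. Your alternative, direct asymptotics of $b^{\lambda,\mu,\nu}_{\ell,m}$ combined with $\|C_\ell^\lambda\|_{L^\infty}=C_\ell^\lambda(1)\asymp\ell^{2\lambda-1}$, is viable and would plausibly work under an even weaker condition on $\nu$, but note one point of care: the argument $\nu+1-\frac{\ell+m}{2}$ tends to $-\infty$, so ``Stirling for the four Gamma functions'' must pass through the reflection formula to see that $1/\Gamma\left(\nu+1-\frac{\ell+m}{2}\right)$ \emph{grows} like $\Gamma\left(\frac{\ell+m}{2}-\nu\right)$ up to a bounded factor; with that in place the near-diagonal terms decay like $(\ell+m)^{\lambda+\mu-2\operatorname{Re}\nu-1}$ and the double sum converges under the stated hypothesis. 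In short: supply a proof (or a precise citation) of the three-parameter integral evaluation --- in effect, of Proposition \ref{prop:2} at $x=1$ --- and your argument closes; without it the proposal stops short of the theorem.
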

More generally, we find a double Gegenbauer expansion of
the function $(s-xt)_+^\alpha$ of two variables $s$ and $t$ with two parameters
$x\in[-1,1]$ and $\alpha\in\mathbb{C}$ with ${\operatorname{Re}\alpha}>0$. 
Here we 
recall that both $\left\{ {\left|y\right|}^\alpha,
{\left|y\right|}^\alpha{\operatorname{sgn}}{\left(y\right)}\right\}$
and $\left\{ y^\alpha_+,y^\alpha_- \right\}$
span the space of continuous homogeneous functions
on $\mathbb{R}$ of degree $\alpha$ when ${\operatorname{Re}\alpha}>0$,
and that the change of basis is given by
\begin{equation}
    {\left|y\right|}^\alpha{\operatorname{sgn}}^\varepsilon(y)=
    y_+^\alpha+(-1)^\varepsilon y^\alpha_-\quad
    (\varepsilon=0,1),
    \label{eqn:Riesz}
\end{equation}
where we set
\begin{equation*}
    y^\alpha_+:=
    \left\{\begin{array}[]{ll}
        y^\alpha&\left( y>0 \right)\\
        0&\left( y\le0 \right),
\end{array}\right.\quad
y_-^\alpha:=\left\{ \begin{array}[]{ll}
0&\left( y\ge0 \right)\\
{\left|y\right|}^\alpha&\left( y<0 \right).
\end{array}\right.
\end{equation*}
(The equation \eqref{eqn:Riesz} can be understood as the
identity of distributions with meromorphic
parameter $\alpha$, see \cite[Sect.~2]{clerc2011}
or \cite[Chap.~1]{gelfand1964}, although we do not
need this viewpoint here.)
 Then, we prove the following integral formula
(see also Proposition \ref{prop:4:6d1} for its variants):
\begin{theorem}
  \label{main-thm}
  For $\ell, m \in \mathbb{N}$,
  $\lambda, \mu, \nu \in \mathbb{C}$ with ${\operatorname{Re}\lambda},
  {\operatorname{Re}\mu}>-\frac{1}{2},{\operatorname{Re}\nu}>0$, and for $- 1 \leqslant x \leqslant
  1$, we set
  \begin{equation*}
	{{B^{\lambda,\mu,\nu}_{\ell,m}\left( x \right)}}:=
     \displaystyle
     \int_{- 1}^1 \displaystyle\int_{- 1}^1 {\left( s{-} xt \right)}_+^{2 \nu} u_{\ell}^{\lambda} (s)
     u_m^{\mu} (t) d s d t,
  \end{equation*}
  where
\[ u_{\ell}^{\lambda} (s) := \frac{2^{2 \lambda - 1} \ell ! \Gamma
   (\lambda)}{\Gamma (2 \lambda + \ell)} (1 - s^2)^{\lambda - \frac{1}{2}}
   C_{\ell}^{\lambda} (s) . \]
  Then we have
  
  \begin{equation}
	  {{B^{\lambda,\mu,\nu}_{\ell,m}\left( x \right)}}=\frac{
		  {{(-1)^{m}}} \pi^2\Gamma(2\nu+1)
     x^m{}_2 F_1
    \left( \begin{array}{c}
      - \nu+\frac{\ell + m}{2} , - \lambda- \nu +\frac{m - \ell}{2} \\
      \mu + m + 1
    \end{array} ; x^2 \right)
    }
    {2^{2\nu+1}
	    \Gamma\left( \nu-\frac{\ell+m}{2}+1 \right)
    \Gamma (\mu + m + 1) \Gamma \left( \lambda +
    \nu + \frac{\ell - m}{2} + 1 \right)
    }.
  \label{eqn:main+}
  \end{equation}
\end{theorem}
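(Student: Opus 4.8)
The plan is to evaluate $B^{\lambda,\mu,\nu}_{\ell,m}(x)$ on the Fourier side and to recognize the outcome as a Weber--Schafheitlin integral. First I would carry out a few reductions. Since $z^{-\mu}J_{m+\mu}(z)=z^{m}\cdot(\text{even entire function of }z)$, replacing $x$ by $-x$ only multiplies the relevant Bessel factor by $(-1)^m$, which matches the factor $x^m$ in \eqref{eqn:main+}; so it suffices to treat $0<x<1$, the cases $x=0$ and $|x|=1$ following from $0<x<1$ by continuity of both sides in $x$ (for the left-hand side this uses that the singularity of $(s-xt)_+^{2\nu}$ along $s=xt$ is integrable uniformly in $x$). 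Next, since $B^{\lambda,\mu,\nu}_{\ell,m}(x)$ is holomorphic in $\nu$ on $\operatorname{Re}\nu>-\tfrac12$ (there the integrand is $L^1$ on $[-1,1]^2$, locally uniformly in $\nu$) and the right-hand side of \eqref{eqn:main+} is meromorphic in $\nu$ and in fact holomorphic on $\operatorname{Re}\nu>-\tfrac12$, it is enough to prove the identity for $\nu$ in the open strip $-\tfrac12<\operatorname{Re}\nu<\tfrac{\ell+m}{2}$ and then invoke analytic continuation.

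On this strip I would insert the Fourier representation
\[
y_+^{2\nu}=\frac{\Gamma(2\nu+1)}{2\pi}\int_{\mathbb R}\big(i(\xi-i0)\big)^{-2\nu-1}e^{iy\xi}\,d\xi ,
\]
substitute $y=s-xt$, and interchange the $\xi$-integration with the $(s,t)$-integration. This interchange is legitimate: $g_\ell^\lambda(s):=(1-s^2)_+^{\lambda-\frac12}C_\ell^\lambda(s)$ and $g_m^\mu$ are integrable with compact support, $\operatorname{Re}(-2\nu-1)\in(-1,0)$ so $\big(i(\xi-i0)\big)^{-2\nu-1}$ is locally integrable, and the product of the two Fourier transforms decays fast enough at infinity on the strip. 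The inner integrals are then evaluated by the classical Poisson-type formula
\[
\int_{-1}^{1}e^{i\xi t}(1-t^2)^{\lambda-\frac12}C_n^\lambda(t)\,dt=\frac{\pi\,2^{1-\lambda}\,\Gamma(n+2\lambda)}{n!\,\Gamma(\lambda)}\,i^{\,n}\,\xi^{-\lambda}J_{n+\lambda}(\xi),
\]
read with $\xi^{-\lambda}J_{n+\lambda}(\xi)=\xi^{n}\cdot(\text{even entire})$, applied once with data $(\lambda,\ell,\xi)$ and once with data $(\mu,m,-x\xi)$. Folding $\int_{\mathbb R}$ onto $\int_0^\infty$ by combining the parity of the Bessel factor with the branch jump of $\big(i(\xi-i0)\big)^{-2\nu-1}$ across the origin collapses $B^{\lambda,\mu,\nu}_{\ell,m}(x)$ to a constant multiple of $\displaystyle\int_0^\infty \xi^{-2\nu-1}\,\xi^{-\lambda}J_{\ell+\lambda}(\xi)\,(x\xi)^{-\mu}J_{m+\mu}(x\xi)\,d\xi$.

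Finally I would invoke the Weber--Schafheitlin formula: for $0<b<a$,
\[
\int_0^\infty J_{\sigma}(at)J_{\tau}(bt)\,t^{-\rho}\,dt=\frac{b^{\tau}\,\Gamma\!\big(\tfrac{\sigma+\tau-\rho+1}{2}\big)}{2^{\rho}\,a^{\tau-\rho+1}\,\Gamma(\tau+1)\,\Gamma\!\big(\tfrac{\sigma-\tau+\rho+1}{2}\big)}\;{}_2F_1\!\Big(\tfrac{\sigma+\tau-\rho+1}{2},\ \tfrac{\tau-\sigma-\rho+1}{2}\ ;\ \tau+1\ ;\ \tfrac{b^2}{a^2}\Big),
\]
with $a=1$, $b=x$, $\sigma=\ell+\lambda$, $\tau=m+\mu$, $\rho=2\nu+1+\lambda+\mu$. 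One checks that the three hypergeometric parameters become precisely $-\nu+\tfrac{\ell+m}{2}$, $-\lambda-\nu+\tfrac{m-\ell}{2}$, $\mu+m+1$, the argument becomes $x^2$, the factor $b^{\tau}=x^{m+\mu}$ cancels the $(x\xi)^{-\mu}$ to leave $x^m$, and the convergence conditions of the formula reduce to $\operatorname{Re}\nu<\tfrac{\ell+m}{2}$ and $\operatorname{Re}(2\nu+\lambda+\mu)>-1$, valid on our strip.

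It then remains to match the scalar prefactor: rewriting $\Gamma\!\big(\tfrac{\ell+m}{2}-\nu\big)$ by the reflection formula in terms of $\Gamma\!\big(\nu-\tfrac{\ell+m}{2}+1\big)$ and a sine, using the duplication formula on $\Gamma(2\lambda),\Gamma(2\mu)$ to absorb the normalizing constants of $u_\ell^\lambda,u_m^\mu$, collecting the powers of $2$ into $2^{2\nu+1}$ and the $\pi$'s into $\pi^2$, and checking that the phase factors $i^{\ell}(-i)^{m}$, the branch jump, and the sine combine to $(-1)^m$, one arrives at exactly \eqref{eqn:main+}. The main obstacle I anticipate is not any single step but the disciplined bookkeeping of branches and constants through these three classical identities -- in particular keeping the $i0$-prescription and the two-valued factors $\xi^{-\lambda}$, $(x\xi)^{-\mu}$ consistent on $\xi<0$ -- together with making the interchange of integrations rigorous despite $y_+^{2\nu}$ not being integrable. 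An alternative, more hands-on route also works: apply the Rodrigues formula to $C_\ell^\lambda$ and $C_m^\mu$ and integrate by parts $\ell$ times in $s$ and $m$ times in $t$ (this produces the $x^m$ and reduces the claim to the case $\ell=m=0$ with $(\lambda,\mu,\nu)$ replaced by $(\ell+\lambda,\,m+\mu,\,\nu-\tfrac{\ell+m}{2})$), then evaluate the resulting two-dimensional integral by two Euler integrals and a quadratic transformation of the ${}_2F_1$; in that version the obstacle is a somewhat intricate hypergeometric summation.
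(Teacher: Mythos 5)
Your proposal is essentially correct, but your main route is genuinely different from the paper's. The paper proves \eqref{eqn:main+} by exactly the argument you relegate to your closing ``alternative'': the Rodrigues formula \eqref{eqn:Rod} plus $(\ell+m)$-fold integration by parts produces the factor $(-1)^m(-2\nu)_{\ell+m}x^m$ and reduces everything to the case $\ell=m=0$ with shifted parameters, i.e.\ to Proposition \ref{prop:2}, which is then established by two applications of Euler's integral representation of ${}_2F_1$, a quadratic transformation (Lemma \ref{lem4}), and a hypergeometric summation identity (Lemma \ref{lem:Fisum}), followed by analytic continuation. Your primary route instead writes $y_+^{2\nu}$ via its Fourier representation, evaluates the $s$- and $t$-integrals by the classical Gegenbauer--Bessel transform, and lands on a Weber--Schafheitlin integral; the parameter matching you indicate ($\sigma=\ell+\lambda$, $\tau=m+\mu$, $\rho=2\nu+1+\lambda+\mu$, $a=1$, $b=x$) does reproduce the hypergeometric data and the two Gamma factors $\Gamma(\mu+m+1)$, $\Gamma(\lambda+\nu+\frac{\ell-m}{2}+1)$ of \eqref{eqn:main+}, with the third obtained by reflection from $\Gamma(\frac{\ell+m}{2}-\nu)$, so the scheme closes. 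What your route buys is that the ${}_2F_1$ appears in one stroke from a tabulated integral, with no need for the summation Lemma \ref{lem:Fisum}; what it costs is the distributional analysis you yourself flag: the triple integral is not absolutely convergent, so the interchange must be justified by a Parseval/regularization argument rather than Fubini, the $i0$-branches and phases $i^{\ell}(-i)^m$ must be tracked to produce $(-1)^m$, and your working strip must additionally satisfy $\operatorname{Re}(2\nu+\lambda+\mu)>-1$ (your stated strip $-\tfrac12<\operatorname{Re}\nu<\tfrac{\ell+m}{2}$ does not guarantee this when $\operatorname{Re}\lambda,\operatorname{Re}\mu$ are near $-\tfrac12$, though this is harmless since analytic continuation only needs a nonempty open parameter set). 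The paper's route avoids all branch and regularization issues by staying with elementary real integrals, at the price of the somewhat intricate summation in Lemma \ref{lem:Fisum}.
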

By the Sobolev-type estimate for the Gegenbauer expansion 
given in Proposition \ref{prop:1718105} and the
elementary identity \eqref{eqn:Riesz},
Theorem \ref{thm:1-1} is deduced
 readily from the special case at $x=1$ of Theorem \ref{main-thm}. As another application 
of Theorem \ref{main-thm}, we prove the following integral formula: we
set $d\omega^{\alpha,\beta}(x):= x^\alpha(1-x)^\beta dx$ and 
$d\omega^\alpha(x):=(1-x^2)^\alpha dx$.
\begin{corollary}
	\label{cor:CCint}
	Assume $\lambda,\mu,\nu,b\in\mathbb{C}$ and $\ell,m\in\mathbb{N}$ satisfy {$\ell+m\in2\mathbb{N}$},
		${\operatorname{Re}\lambda},{\operatorname{Re}\mu}>-\frac{1}{2},{\operatorname{Re}\nu}>0$, and 
		${\operatorname{Re}b}>-1$.
		Then $\frac{1}{2}
		(m-\ell)$ is an integer and we have
		
\begin{alignat*}{1}
&\int_{-1}^1\int_{-1}^1\int_0^1{\left|s-t\sqrt{y}\right|}^{2\nu}
C^\lambda_\ell(s)C^\mu_m(t)d\omega^{\mu+\frac{m}{2},b}(y)
d\omega^{\lambda-\frac{1}{2}}(s)
d\omega^{\mu-\frac{1}{2}}(t)\\
&=
{c}\cdot
\frac
{
(2\lambda)_\ell(2\mu)_m
\left( -\nu \right)_{\frac{\ell+m}{2}}
\Gamma\left( \lambda+\frac{1}{2} \right)\Gamma\left( \mu+\frac{1}{2} \right)
\Gamma \left( \nu+\frac{1}{2}\right)
\Gamma \left( { \lambda+\mu+2\nu+b+2} \right)
\Gamma \left( b+1 \right)
}
{
\Gamma \left(\lambda+\mu+\nu+b+\frac{\ell+m}{2}+2 \right) 
\Gamma \left( \lambda +\nu+\frac{ \ell- m   }{2}+1 \right) 
\Gamma \left(
\mu +\nu+b-\frac{  \ell-m }{2}+2 \right)
},
\end{alignat*}
	where $c$ is the nonzero constant 
	$\displaystyle
	\frac{\pi^{\frac{1}{2}}(-1)^{\frac{m-\ell}{2}}
	}{\ell!m!}$.
\end{corollary}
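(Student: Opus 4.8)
The plan is to reduce the triple integral to the $x=\sqrt{y}$ specialization of Theorem~\ref{main-thm}, integrated once more against the weight $d\omega^{\mu+\frac{m}{2},b}(y)$, and then to evaluate the remaining one–dimensional integral by classical hypergeometric summation.

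First I would rewrite the inner double integral over $(s,t)$. Since $C^\lambda_\ell(-s)=(-1)^\ell C^\lambda_\ell(s)$ and $(1-s^2)^{\lambda-\frac12}$ is even, the substitution $s\mapsto-s$ gives $u^\lambda_\ell(-s)=(-1)^\ell u^\lambda_\ell(s)$, and similarly in $t$. Writing $|s-xt|^{2\nu}=(s-xt)^{2\nu}_++(xt-s)^{2\nu}_+$ via \eqref{eqn:Riesz} with $\varepsilon=0$, applying $s\mapsto-s$ to the second term, and using $t\mapsto-t$ in the definition of $B^{\lambda,\mu,\nu}_{\ell,m}$ to see that $B^{\lambda,\mu,\nu}_{\ell,m}(-x)=(-1)^mB^{\lambda,\mu,\nu}_{\ell,m}(x)$, one obtains
\[
\int_{-1}^1\!\int_{-1}^1|s-xt|^{2\nu}u^\lambda_\ell(s)u^\mu_m(t)\,ds\,dt=\bigl(1+(-1)^{\ell+m}\bigr)B^{\lambda,\mu,\nu}_{\ell,m}(x),
\]
which equals $2B^{\lambda,\mu,\nu}_{\ell,m}(x)$ because $\ell+m\in2\mathbb{N}$; the same parity makes $\tfrac12(m-\ell)=\tfrac12(\ell+m)-\ell$ an integer. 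Undoing the normalization in $u^\lambda_\ell$ and $u^\mu_m$ then expresses the $(s,t)$–integral appearing in the Corollary as $\tfrac{\Gamma(2\lambda+\ell)\Gamma(2\mu+m)}{2^{2\lambda+2\mu-2}\ell!\,m!\,\Gamma(\lambda)\Gamma(\mu)}\cdot 2B^{\lambda,\mu,\nu}_{\ell,m}(x)$.

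Next I would set $x=\sqrt{y}$ and integrate in $y$ against $y^{\mu+\frac{m}{2}}(1-y)^b\,dy$ on $[0,1]$; Fubini is legitimate since $|s-t\sqrt{y}|^{2\operatorname{Re}\nu}$ is bounded on $[-1,1]^2\times[0,1]$ and the weights are integrable under the stated hypotheses. Substituting the closed form \eqref{eqn:main+} — whose $x$–dependence is precisely $x^m\,{}_2F_1(\,\cdot\,;x^2)$ — cancels the half–integer power of $y$ cleanly and reduces everything to the Euler–type integral
\[
\int_0^1 y^{\mu+m}(1-y)^b\,{}_2F_1\!\Bigl(-\nu+\tfrac{\ell+m}{2},\,-\lambda-\nu+\tfrac{m-\ell}{2};\,\mu+m+1;\,y\Bigr)dy .
\]
Expanding the ${}_2F_1$ and integrating term by term produces $\tfrac{\Gamma(\mu+m+1)\Gamma(b+1)}{\Gamma(\mu+m+b+2)}$ times a ${}_3F_2$ at $1$; the key point is that one of its numerator parameters, $\mu+m+1$, coincides with one of its denominator parameters, so the ${}_3F_2$ degenerates to ${}_2F_1\bigl(-\nu+\tfrac{\ell+m}{2},\,-\lambda-\nu+\tfrac{m-\ell}{2};\,\mu+m+b+2;\,1\bigr)$, which Gauss's summation theorem evaluates (convergence holds because $\operatorname{Re}(\lambda+\mu+2\nu+b+2)>0$). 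This gives the $y$–integral as a ratio of four Gamma functions.

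It then remains to multiply together the constants. Using Legendre's duplication formula on $\Gamma(2\lambda+\ell)$, $\Gamma(2\mu+m)$ and $\Gamma(2\nu+1)$, and Euler's reflection formula to rewrite $1/\Gamma(\nu-\tfrac{\ell+m}{2}+1)$ as $(-1)^{(\ell+m)/2}(-\nu)_{(\ell+m)/2}/\Gamma(\nu+1)$, all powers of $2$ cancel, the powers of $\pi$ collapse to $\pi^{1/2}$, and the sign $(-1)^m(-1)^{(\ell+m)/2}$ becomes $(-1)^{(m-\ell)/2}$ because $\ell+m$ is even; one is left with exactly the asserted formula, with $c=\pi^{1/2}(-1)^{(m-\ell)/2}/(\ell!\,m!)$ manifestly nonzero. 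I expect the conceptual steps — the parity reduction of the first paragraph and the degeneration of the ${}_3F_2$ to a case of Gauss's theorem (rather than needing a Saalschütz-type identity) — to be routine; the main obstacle will be the bookkeeping in the last step, where keeping the duplication, reflection, and Pochhammer conversions and the overall sign consistent is where a slip is most likely.
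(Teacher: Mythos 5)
Your proposal is correct and follows essentially the same route as the paper: the paper reduces the corollary (via Proposition \ref{prop:4:6d1} and the normalization of $u^\lambda_\ell$) to Lemma \ref{lem:VXGPjH8UqL-35d3}, i.e.\ to integrating the closed form \eqref{eqn:main+} at $x^2=y$ against the Jacobi weight, and evaluates that $y$-integral by the tabulated formula \eqref{eqn:Fint}, which is exactly the Euler-type integral you re-derive by termwise expansion and Gauss's theorem. Your parity reduction reproduces the second identity of Proposition \ref{prop:4:6d1}, and your constant bookkeeping (duplication formula plus the Pochhammer identity for $1/\Gamma(\nu-\frac{\ell+m}{2}+1)$, which is a finite Pochhammer manipulation rather than the reflection formula) checks out.
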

\begin{remark}
As in \eqref{eqn:Riesz}, we may take another basis for the space of 
continuous homogeneous functions on $\mathbb R$ of degree $\alpha$ given by
\begin{equation}
    (x \pm i 0)^\alpha =
    x_+^\alpha+e^{\pm \pi i \alpha} x^\alpha_-\quad.
    \label{eqn:Riesz2}
\end{equation}
By change of basis, 
 we can derive easily closed formul{\ae}
 of the double Gegenbauer expansion of $|s-t|_{\pm}^{2 \nu}$
 and $|s-t \pm i 0|^{2 \nu}$ from \eqref{eqn:alm}
 in Theorem \ref{thm:1-1}
 and {\it{vice versa}}.  
Similarly,
 we can find readily integral formul{\ae}
 for $(s-x t)_{-}^{2 \nu}$, 
 $|s- x t|^{2 \nu}$, 
 $|s-x t|^{2 \nu} {\operatorname{sgn}}(s-x t)$, 
 and $(s- x t \pm i 0)^{2 \nu}$
 analogous to Theorem \ref{main-thm}.  
Likewise for Corollary \ref{cor:CCint}.  
\end{remark}
	Selberg-type integrals are related to (finite-dimensional) representation
	theory of semisimple Lie algebras,
 see {\cite{forrester2008importance, tarasov2003selberg}} and references therein.
	On the other hand, (an equivalent form of) Theorem \ref{thm:1-1}  was given earlier by Kobayashi--Mano 
\cite[Lem.~7.9.1]{kobayashi2011schrodinger},
 which was utilized in the study of the 
{\textit{unitary inversion operator}} of the geometric quantization of the minimal nilpotent orbit.
 Furthermore, the precise location of 
	the zeros and
	the poles of the meromorphic continuation
	of the formul\ae{} given in
	Theorem \ref{main-thm} and Proposition \ref{cor:1} will be used in the study of symmetry
	breaking operators for infinite-dimensional representations when we extend the
	work {\cite{kobayashi2015symmetry}} on the rank one group
	to indefinite orthogonal groups $O (p,q)$ of higher rank. This will be given in a subsequent paper.

	The proof of Theorems \ref{thm:1-1} and \ref{main-thm} will be given in
	Sections \ref{sec:pfThm} and \ref{sec:2}, respectively.
        Corollary \ref{cor:CCint} is shown in Section \ref{sec:pfOfCol}.
	Special cases and the limit case of Theorem \ref{main-thm} will be discussed
	in Sections \ref{sec:4} and \ref{sec:limit}.

	Notation: $\mathbb{N}=\left\{ 0,1,2,\cdots \right
		\}$, $(x)_n= 
		x(x+1)\cdots(x+n-1)$ (the Pochhammer
		symbol), and $[\lambda]$ denotes
		the greatest
		integer that does not exceed
		$\lambda\in
		\mathbb{R}$.
	\section{Proof of the main theorem}\label{sec:2}
	In this section we prove that Theorem \ref{main-thm}
	is deduced from the special case $\ell=m=0$, namely, from
	the following
	integral formula \eqref{eqn:stz}.
	Proposition \ref{prop:2}
	will be proved in
	Section \ref{sec:3}.

	\begin{proposition}
		\label{prop:2}Suppose $a, b, c \in \mathbb{C}$ satisfy ${\operatorname{Re}a},
		{\operatorname{Re}b}> 0$ and ${\operatorname{Re}c}>\frac{1}{2}$. For $- 1 \leqslant x \leqslant 1$, we have
	
{\begin{alignat}{1} & {\displaystyle\int_{-1}^{1}\int_{-1}^{1}{\left( s-x{{}} t \right)_+^{2{{}} c-1}}{\left(1-s^2\right)^{a-1}}{\left(1-t^2\right)^{b-1}}{d}s{d}t}\nonumber\\=& \frac{\sqrt{\pi}{{}} {\Gamma\left( a \right)}{\Gamma\left( b \right)}{\Gamma\left( c \right)}}{2{{}} {\Gamma\left( a + c \right)}{\Gamma\left( b + \frac{1}{2}  \right)}}{{}}{{}_2F_1\left( \begin{array}[]{c}\displaystyle-c+\frac{1}{2},-a-c+1\\b+\frac{1}{2}\end{array};x^2\right)}.\label{eqn:stz}\end{alignat}}
\end{proposition}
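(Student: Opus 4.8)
The plan is to integrate out $s$ first — turning the inner integral into Euler's integral for a Gauss hypergeometric function — and then to use the evenness of the outer weight $(1-t^2)^{b-1}$ to collapse the $t$–integral to a single ${}_2F_1$ in $x^2$.

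First I would fix $t\in[-1,1]$ and note that, since $xt\in[-1,1]$, the cutoff $(\cdot)_+$ merely moves the lower endpoint of the $s$–integral to $s=xt$; the substitution $s=1-(1-xt)v$ then identifies it with Euler's integral for ${}_2F_1$ (valid because $\operatorname{Re}a>0$ and $\operatorname{Re}c>0$):
\[
  \int_{-1}^{1}(s-xt)_+^{2c-1}(1-s^2)^{a-1}\,ds
  =2^{a-1}\frac{\Gamma(a)\Gamma(2c)}{\Gamma(a+2c)}\,(1-xt)^{a+2c-1}\,{}_2F_1\!\Big(1-a,\,a;\,a+2c;\,\tfrac{1-xt}{2}\Big)=:\Phi(xt),
\]
so that, by Fubini, the left side of \eqref{eqn:stz} equals $\int_{-1}^{1}\Phi(xt)(1-t^2)^{b-1}\,dt$.

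Next, since $(1-t^2)^{b-1}$ is even in $t$, I would replace $\Phi(xt)$ by its even part $\tfrac12\big(\Phi(xt)+\Phi(-xt)\big)$. Applying the Euler transformation ${}_2F_1(1-a,a;a+2c;z)=(1-z)^{a+2c-1}{}_2F_1(2a+2c-1,2c;a+2c;z)$ to both $\Phi(\pm xt)$ turns the algebraic prefactor into $2^{-(a+2c-1)}(1-x^2t^2)^{a+2c-1}$ in each, whence
\[
  \tfrac12\big(\Phi(xt)+\Phi(-xt)\big)=\frac{\Gamma(a)\Gamma(2c)}{2^{2c+1}\Gamma(a+2c)}(1-x^2t^2)^{a+2c-1}\!\left[{}_2F_1\!\Big(2a{+}2c{-}1,2c;a{+}2c;\tfrac{1-xt}{2}\Big)+{}_2F_1\!\Big(2a{+}2c{-}1,2c;a{+}2c;\tfrac{1+xt}{2}\Big)\right].
\]
The third parameter $a+2c$ equals the well-poised value $\tfrac12(\alpha+\beta+1)$ with $\alpha=2a+2c-1$, $\beta=2c$, so each summand solves the $p\mapsto-p$–invariant equation $(1-p^2)u''-(\alpha+\beta+1)pu'-\alpha\beta u=0$, which has an ordinary point at $p=0$; hence the bracketed (even) sum is a scalar multiple of the even solution ${}_2F_1(\tfrac\alpha2,\tfrac\beta2;\tfrac12;p^2)$, the scalar being its value at $p=0$, namely $2\,{}_2F_1(\alpha,\beta;\tfrac12(\alpha+\beta+1);\tfrac12)$, which Gauss's second summation theorem gives explicitly. (Equivalently, this is the classical quadratic transformation for ${}_2F_1(\alpha,\beta;\tfrac12(\alpha+\beta+1);\cdot)$.) One further Euler transformation ${}_2F_1(a{+}c{-}\tfrac12,c;\tfrac12;p^2)=(1-p^2)^{1-a-2c}{}_2F_1(\tfrac12{-}c,1{-}a{-}c;\tfrac12;p^2)$ cancels the remaining power of $1-x^2t^2$, and after reorganizing the Gamma factors by Legendre's duplication formula one arrives at $\tfrac12(\Phi(xt)+\Phi(-xt))=\frac{\Gamma(a)\Gamma(c)}{2\Gamma(a+c)}\,{}_2F_1(\tfrac12-c,1-a-c;\tfrac12;x^2t^2)$. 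Expanding this in powers of $x^2t^2$ and integrating term by term against $(1-t^2)^{b-1}$ via $\int_{-1}^{1}t^{2j}(1-t^2)^{b-1}\,dt=B(\tfrac12,b)\tfrac{(1/2)_j}{(b+1/2)_j}$, the factors $(1/2)_j$ cancel and the series reassembles into $B(\tfrac12,b)\,{}_2F_1(\tfrac12-c,1-a-c;b+\tfrac12;x^2)=\sqrt\pi\,\tfrac{\Gamma(b)}{\Gamma(b+1/2)}\,{}_2F_1(\tfrac12-c,1-a-c;b+\tfrac12;x^2)$, which is exactly the right side of \eqref{eqn:stz}. The term-by-term step is legitimate for $|x|<1$ (uniform convergence in $t\in[-1,1]$), and the endpoints $x=\pm1$ follow by continuity, both sides being continuous there since $\operatorname{Re}c>\tfrac12$ (integrability up to the diagonal) and $\operatorname{Re}(a+2c-1)>0$ (convergence of the Gauss series at argument $1$).

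The one genuinely delicate point is the middle step: recognizing that averaging $\Phi(xt)$ with $\Phi(-xt)$ yields a ${}_2F_1$ whose third parameter is the well-poised value $\tfrac12(\alpha+\beta+1)$ — precisely the hypothesis of the quadratic transformation that folds the argument down to $x^2t^2$. Everything after that is bookkeeping: chaining the Euler transformations and the duplication formula so that the constant and the parameters come out precisely as in \eqref{eqn:stz}. The $s$–integration and the final term-by-term $t$–integration are routine.
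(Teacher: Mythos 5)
Your argument is correct, and after the common first step it follows a genuinely different route from the paper. Both proofs begin identically: integrate out $s$ via Euler's integral to get $(1-xt)^{a+2c-1}\,{}_2F_1\bigl(1-a,a;a+2c;\tfrac{1-xt}{2}\bigr)$ up to constants. The paper then expands this ${}_2F_1$ in powers of $\tfrac{1}{2}(1-xt)$, evaluates each $t$-integral $\int_{-1}^1(1-tx)^{a+2c-1+i}(1-t^2)^{b-1}dt$ by a separate lemma (Lemma \ref{lem4}, itself a quadratic transformation), and then must resum the resulting series of hypergeometric functions via the double-series rearrangement identity of Lemma \ref{lem:Fisum}, whose core is the evaluation of ${}_2F_1(a,1-a;c;\tfrac12)$. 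You instead symmetrize in $t$ against the even weight, apply Euler's transformation so that the average $\tfrac12(\Phi(xt)+\Phi(-xt))$ becomes an even solution of the hypergeometric ODE in $p=xt$ with the well-poised third parameter $\tfrac12(\alpha+\beta+1)$, identify it (ordinary point at $p=0$, uniqueness of the even solution, Gauss's second theorem for the constant) with a multiple of ${}_2F_1\bigl(a+c-\tfrac12,c;\tfrac12;x^2t^2\bigr)$, and after one more Euler transformation the $t$-integration is a termwise Beta integral in which the $(\tfrac12)_j$ cancel — so the resummation step that the paper handles with Lemma \ref{lem:Fisum} disappears entirely. The ingredients (a quadratic transformation, a $\tfrac12$-argument summation theorem, Legendre duplication) are the same in spirit, but your organization trades the double-series interchange for the ODE/connection-formula argument, which is arguably cleaner. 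Two small points: your continuity argument at $x=\pm1$ should cite $\operatorname{Re}(a+b+2c-1)>0$ for convergence of ${}_2F_1\bigl(\tfrac12-c,1-a-c;b+\tfrac12;x^2\bigr)$ at argument $1$ (your condition $\operatorname{Re}(a+2c-1)>0$ suffices only because $\operatorname{Re}b>0$), and it is worth saying explicitly that the even-solution identification is first local at the ordinary point $p=0$ and then extends to all of $(-1,1)$ by analyticity, since the only singularities of the equation are at $p=\pm1$.
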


\begin{proof}
[Proof of Theorem \ref{main-thm}]
The Rodrigues formula for the Gegenbauer polynomial (see
	  {\cite[(6.4.14)]{andrews2000special}} for instance) shows

{\begin{equation}
{u_{\ell}^\lambda(s)}=
          \frac{{\left(-1\right)^{\ell}}{}{2^{-\ell}}{}\sqrt{\pi}}{{\Gamma\left( 
              \lambda + \ell + \frac{1}{2} \right)}} \cdot {\frac{d^{\ell}}{ds^{\ell}}}
              {{\left(1-s^2\right)^{\lambda + \ell - \frac{1}{2}}}} . 
     \label{eqn:Rod}
\end{equation}}
By the definition of $B^{\lambda,\mu,\nu}_{\ell,m}(x)$,
	  the left-hand side of
	  \eqref{eqn:main+} amounts to
	  \begin{eqnarray}
		  & \displaystyle\frac{2^{- \ell - m} (-1)^{\ell+m}\pi}
		  {\Gamma \left( \lambda + \ell + \frac{1}{2}
	    \right) \Gamma \left( \mu + m + \frac{1}{2} \right)} I_{\ell, m} (x), & 
	    \nonumber
	  \end{eqnarray}
	  where we set
	  \[ I_{\ell, m}(x) \equiv I^{\lambda,\mu,\nu}_{\ell,m}(x) := \displaystyle\int_{- 1}^1 \displaystyle\int_{- 1}^1 \left( s{-} xt \right)_+
		  ^{2 \nu}
	     \frac{\partial^{\ell}}{\partial s^{\ell}} (1 - s^2)^{\lambda + \ell -
	     \frac{1}{2}} \frac{\partial^m}{\partial t^m} (1 - t^2)^{\mu + m -
	     \frac{1}{2}} d s d t. \]
Suppose ${\operatorname{Re}\nu} > \frac{\ell + m}{2}$, ${\operatorname{Re}\lambda} >
   \frac{1}{2}$ and ${\operatorname{Re}\mu} > \frac{1}{2}$. Then
	  integration by parts gives
\begin{align*}
			  I_{\ell, m} (x) &= (-1)^{\ell+m}\int_{- 1}^1 \displaystyle\int_{- 1}^1 
			  \left(\frac{\partial^{\ell +
	    m}}{\partial s^{\ell} \partial t^m} \left( s{-} xt \right)_+^{2 \nu}  \right)
			  (1 - s^2)^{\lambda + \ell -
	    \frac{1}{2}} (1 - t^2)^{\mu + m - \frac{1}{2}}  d s d t\\
		  &={{(-1)^{m}}} (- 2 \nu)_{\ell + m} x^m \displaystyle\int_{- 1}^1 \displaystyle\int_{- 1}^1 \left(s{-} xt \right)_+^{2 \nu -
	    \ell - m} (1 - s^2)^{\lambda + \ell - \frac{1}{2}} (1 - t^2)^{\mu + m -
	    \frac{1}{2}} d s d t,
\end{align*}
because
\[ \frac{\partial^{\ell + m}}{\partial s^{\ell} \partial t^m} \left( s{-} xt \right)_+^{2
	  \nu} =
	  (-1)^{\ell}
	  (- 2 \nu)_{\ell + m} x^m \left( s{-} xt \right)_+^{2 \nu - \ell - m}. \]
Applying Proposition \ref{prop:2}
 with $(a, b, c) = \left( \lambda + \ell + \frac{1}{2}, \mu + m + \frac{1}{2}, \nu +
  \frac{1}{2} (1 - \ell - m) \right)$, we see that
  the equation \eqref{eqn:main+} holds in the domain of
  $(\lambda,\mu,\nu)$ that we treated. Now Theorem \ref{main-thm} follows
  by analytic continuation.
\end{proof}

\section{Proof of Proposition \ref{prop:2}}\label{sec:3}
	In this section we show Proposition \ref{prop:2}.
	We use the following two lemmas.
	\begin{lemma}
		\label{lem4}For $a, b \in \mathbb{C}$ with ${\operatorname{Re}a}, {\operatorname{Re}b} >
	  0$ and for $- 1 \leqslant x \leqslant 1$ we have
	  
{\begin{equation}{\displaystyle\int_{-1}^{1}{\left(1-t{{}}x\right)^{a-1}}{\left(1-t^2\right)^{b-1}}{d}t}={B\left(\frac{1}{2},b\right)}{}{{}_2F_1\left( \begin{array}[]{c}\displaystyle\frac{1-a}{2},\frac{2-a}{2}\\b+\frac{1}{2}\end{array};x^2\right)}.\label{eqn:EulerRep}\end{equation}}
\end{lemma}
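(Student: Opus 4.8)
The plan is to prove \eqref{eqn:EulerRep} first for $|x|<1$, where a binomial expansion of the first factor converges absolutely, and then to reach the endpoints $x=\pm1$ by continuity. (One could alternatively substitute $t=1-2s$ to bring the integral into a standard Euler representation of a ${}_2F_1$ and then apply a quadratic transformation, but the direct series computation below is cleaner.) First I would expand the factor $(1-tx)^{a-1}$ in its binomial series: for $|x|<1$ and $t\in[-1,1]$ we have $|tx|\le|x|<1$, so $(1-tx)^{a-1}=\sum_{k\ge0}\binom{a-1}{k}(-1)^k x^k t^k$. The $k$-th summand is dominated in absolute value by $\bigl|\binom{a-1}{k}\bigr|\,|x|^k(1-t^2)^{\operatorname{Re}b-1}$, and since $\sum_k\bigl|\binom{a-1}{k}\bigr|\,|x|^k<\infty$ while $(1-t^2)^{\operatorname{Re}b-1}\in L^1([-1,1])$ (as $\operatorname{Re}b>0$), dominated convergence justifies integrating term by term.

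Next I would evaluate the resulting one-dimensional integrals. The terms with $k$ odd vanish by parity of the integrand, and for $k=2j$ the substitution $u=t^2$ yields $\int_{-1}^1 t^{2j}(1-t^2)^{b-1}\,dt=\int_0^1 u^{j-1/2}(1-u)^{b-1}\,du=B(j+\tfrac12,b)$; thus the left-hand side of \eqref{eqn:EulerRep} becomes $\sum_{j\ge0}\binom{a-1}{2j}\,B(j+\tfrac12,b)\,x^{2j}$. It then remains to identify this with the asserted hypergeometric series, for which I would use $\binom{a-1}{2j}=(1-a)_{2j}/(2j)!$, the duplication identities $(1-a)_{2j}=4^j\bigl(\tfrac{1-a}{2}\bigr)_j\bigl(\tfrac{2-a}{2}\bigr)_j$ and $(2j)!=4^j\,j!\,\bigl(\tfrac12\bigr)_j$, and the ratio $B(j+\tfrac12,b)=\dfrac{(\frac12)_j}{(b+\frac12)_j}\,B(\tfrac12,b)$; the $(\tfrac12)_j$ factors cancel and the coefficient of $x^{2j}$ reduces to $B(\tfrac12,b)\,\dfrac{(\frac{1-a}{2})_j(\frac{2-a}{2})_j}{j!\,(b+\frac12)_j}$, whose sum over $j$ is exactly $B(\tfrac12,b)\,{}_2F_1\!\left(\frac{1-a}{2},\frac{2-a}{2};b+\frac12;x^2\right)$, as claimed.

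Finally I would pass to the endpoints: both sides are continuous in $x$ on $(-1,1)$, and when $\operatorname{Re}(a+b)>1$ they stay finite and continuous up to $x=\pm1$, so \eqref{eqn:EulerRep} extends there by continuity; when $\operatorname{Re}(a+b)\le1$ one checks directly that both sides diverge at $x=\pm1$ in the same way (the ${}_2F_1$ at argument $1$ converges precisely when $\operatorname{Re}(a+b)>1$, which matches the integrability threshold of the left-hand side near $t=\pm1$ at $x=\pm1$).

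I expect no serious obstacle here: the only delicate point is that the binomial expansion is valid only for $|x|<1$, so the endpoint case must be treated separately by the continuity argument above; the remaining work is routine manipulation of Pochhammer and duplication identities.
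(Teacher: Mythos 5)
Your proof is correct, but it follows a genuinely different route from the paper. The paper evaluates the left-hand side of \eqref{eqn:EulerRep} by Euler's integral representation of ${}_2F_1$, obtaining the expression \eqref{eqn:quad} with argument $\frac{-2x}{1-x}$, and then invokes the quadratic transformation \cite[Thm.~3.13]{andrews2000special} together with the duplication formula; you instead expand $(1-tx)^{a-1}$ in its binomial series for $|x|<1$, integrate term by term (justified by dominated convergence, with the odd terms killed by parity and the even ones giving beta integrals $B(j+\tfrac12,b)$), and resum via the Pochhammer duplication identities $(1-a)_{2j}=4^j(\tfrac{1-a}{2})_j(\tfrac{2-a}{2})_j$ and $(2j)!=4^j j!(\tfrac12)_j$ — all of which check out. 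Your argument is more elementary and self-contained (it needs only the beta integral and standard Pochhammer identities, not the quadratic transformation theorem), while the paper's is shorter given the cited references and stays within the same toolkit (Euler representation plus transformation formulas) used in the proof of Proposition \ref{prop:2}. Your extra care at $x=\pm1$ is also reasonable: note that in the paper the lemma is only ever applied with $|x|\leqslant 1-2\varepsilon$, the endpoints being recovered by continuity at the level of \eqref{eqn:stz}, so the restriction of the series expansion to $|x|<1$ is harmless; and your observation that at $x=\pm1$ both sides converge exactly when $\operatorname{Re}(a+b)>1$ (and then agree by continuity/Abel's theorem) correctly delimits when the endpoint identity is meaningful.
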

	\begin{lemma}
		\label{lem:Fisum}Let $a, b, d \in \mathbb{C}$
	with $b + \frac{1}{2} \notin
	  -\mathbb{N}$ and $d\notin-\mathbb{N}$. Then the series
	{\begin{equation*}G(a,b,d;\zeta):= {\sum_{i=0}^{\infty}\frac{{\left(a\right)_{i}}{}{\left(1-a\right)_{i}}}{{2^{i}}{}{i!}{}{\left(d\right)_{i}}}{}{{}_2F_1\left( \begin{array}[]{c}\displaystyle\frac{1-d-i}{2},\frac{2-d-i}{2}\\b+\frac{1}{2}\end{array};\zeta\right)}}\end{equation*}}
	  converges when $| \zeta | < 1$, and we have the following closed formula:

\begin{equation}
    G (a, b, d ; \zeta)=
    \frac{{2^{1 - d}}{}\sqrt{\pi}{}{\Gamma\left( d \right)}}
    {{\Gamma\left( \frac{a+d}{2} \right)}{}{\Gamma\left( \frac{1-a+d}{2} \right)}}{}
         {{}_2F_1\left( \begin{array}[]{c}\displaystyle1-\frac{a+d}{2},\frac{1+a-d}{2}\\b+\frac{1}{2}\end{array};\zeta\right)}.
    \label{eqn:iF}
\end{equation}
 	\end{lemma}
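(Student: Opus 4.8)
The plan is to expand the inner hypergeometric functions appearing in the series that defines $G$ as power series in $\zeta$, interchange the two summations, and evaluate the resulting sum over $i$ in closed form by a classical Gauss‑type summation at argument $\tfrac12$. The algebra is short; the one point that requires genuine care is the analytic bookkeeping that licenses the interchange for all $|\zeta|<1$.

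Concretely, using the duplication identity $(\tfrac x2)_k(\tfrac{x+1}{2})_k=4^{-k}(x)_{2k}$ with $x=1-d-i$, I would write
\[
  {}_2F_1\Bigl(\tfrac{1-d-i}{2},\tfrac{2-d-i}{2};b+\tfrac12;\zeta\Bigr)
  =\sum_{k=0}^{\infty}\frac{(1-d-i)_{2k}}{4^{k}\,(b+\tfrac12)_k\,k!}\,\zeta^{k},
\]
so that $G(a,b,d;\zeta)=\sum_{k\ge0}\dfrac{\zeta^{k}}{4^{k}(b+\tfrac12)_k\,k!}\,S_k$ with $S_k:=\sum_{i\ge0}\dfrac{(a)_i(1-a)_i(1-d-i)_{2k}}{2^{i}\,i!\,(d)_i}$. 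The key simplification is the Gamma‑function identity $\dfrac{(1-d-i)_{2k}}{(d)_i}=\dfrac{\Gamma(d)}{\Gamma(d+i-2k)}=\dfrac{(d-2k)_{2k}}{(d-2k)_i}$, which turns $S_k$ into $(d-2k)_{2k}\,{}_2F_1(a,1-a;d-2k;\tfrac12)$. Applying the classical summation ${}_2F_1(a,1-a;c;\tfrac12)=2^{1-c}\sqrt{\pi}\,\Gamma(c)\big/\bigl(\Gamma(\tfrac{a+c}{2})\Gamma(\tfrac{1-a+c}{2})\bigr)$ (see, e.g., \cite{andrews2000special}) with $c=d-2k$, and using $(d-2k)_{2k}=\Gamma(d)/\Gamma(d-2k)$ together with the reflection‑type identity $\Gamma(z-k)^{-1}=(-1)^{k}(1-z)_k/\Gamma(z)$ at $z=\tfrac{a+d}{2}$ and $z=\tfrac{1-a+d}{2}$, I obtain
\[
  S_k=\frac{2^{1-d}\sqrt{\pi}\,\Gamma(d)}{\Gamma(\tfrac{a+d}{2})\Gamma(\tfrac{1-a+d}{2})}\cdot 4^{k}\,\bigl(1-\tfrac{a+d}{2}\bigr)_k\bigl(\tfrac{1+a-d}{2}\bigr)_k .
\]
Substituting this back, the two factors $4^{k}$ cancel and the remaining $k$‑series is precisely ${}_2F_1\bigl(1-\tfrac{a+d}{2},\tfrac{1+a-d}{2};b+\tfrac12;\zeta\bigr)$, which yields \eqref{eqn:iF}.

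The main obstacle is justifying the interchange of $\sum_i$ and $\sum_k$ --- equivalently, absolute convergence of the resulting double series --- for all $|\zeta|<1$, as well as the degenerate parameter cases. I would first prove the identity for $|\zeta|$ small, where absolute convergence is immediate from $\bigl|(a)_i(1-a)_i/(2^{i}i!(d)_i)\bigr|=O(2^{-i}i^{-\operatorname{Re}\,d})$ and the crude bound $|(1-d-i)_{2k}|\le(i+2k+|d|)^{2k}$, and then extend to $|\zeta|<1$ by analytic continuation in $\zeta$, after checking that the series defining $G$ converges locally uniformly on $|\zeta|<1$. That last point I would get by controlling the inner ${}_2F_1$ as $i\to\infty$ through Euler's transformation, which rewrites it as $(1-\zeta)^{\,b+d+i-1}$ times a ${}_2F_1$ with the (now large positive) parameters $b+\tfrac{d+i}{2}$ and $b-\tfrac12+\tfrac{d+i}{2}$; weighing the decaying factor $(1-\zeta)^{\,b+d+i-1}$ against the growth of the transformed function yields a bound $O\bigl(r(\zeta)^{i}\,i^{N}\bigr)$ with $r(\zeta)<2$ locally uniformly on $|\zeta|<1$, which is absorbed by the $2^{-i}$ in the outer weight. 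Finally, the values of $d$ for which $d-2k\in-\mathbb{N}$ for some $k$ (so that the summation above or the symbols $(d)_i,(d-2k)_i$ must be read as limits) are dispatched by the hypothesis $d\notin-\mathbb{N}$ together with analytic continuation in $d$, since every quantity above is meromorphic in $d$.
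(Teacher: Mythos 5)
Your proof is correct and follows essentially the same route as the paper's: expand the inner ${}_2F_1$ via the Pochhammer duplication identity, interchange the two sums and use the shift identity to recognize $(d-2k)_{2k}\,{}_2F_1\left(a,1-a;d-2k;\tfrac12\right)$, evaluate it by Gauss's second summation theorem, and simplify with the reflection identity to reach \eqref{eqn:iF}. The convergence and interchange analysis you add is extra care that the paper's argument leaves implicit.
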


	Postponing the verification of Lemmas
	\ref{lem4} and \ref{lem:Fisum}, we first
	show Proposition \ref{prop:2}.

	{\noindent{\textit{\textbf{Proof of Proposition \ref{prop:2}.}\ }}}
By the change of variables $s = 1 - (1 {-} x) t$, the interval $0 \leqslant t
	  \leqslant 1$ is transformed onto $(- 1 \leqslant) {}  x \leqslant s \leqslant
	  1$, and thus Euler's integral representation of the hypergeometric function
	  $_2 F_1$ shows
	  
\begin{equation*}
    {\displaystyle\int_{-1}^{1}{\left( s-x \right)_+^{2c-1}}{}{\left(1-s^2\right)^{a-1}}{d}s}
    =
    {2^{a-1}}{}
    {B\left(2c,a\right)}{}
    {\left(1-x^2\right)^{2c+a}}{}
    {{}_2F_1\left( \begin{array}[]{c}\displaystyle1-a,2c\\2c+a\end{array};\frac{1-x}{2}\right)}
        .
\end{equation*}
 	  
	  Therefore the left-hand side of $(\ref{eqn:stz})$ equals
	  \begin{equation*}
		  2^{a-1} B (2 c, a) \displaystyle\int_{- 1}^1 (1 {-} tx)^{2 c + a - 1}{}_2 F_1 \left(
	    \begin{array}{c}
	      1 - a, a\\
	      2 c + a
	    \end{array} ; \frac{1 {-} tx}{2} \right) (1 - t^2)^{b - 1} d t.   
	  \end{equation*}
	  Fix $\varepsilon > 0$. Assume $| x | \leqslant 1 - 2 \varepsilon$. Then
	  $\left| \frac{1 {-} tx}{2} \right| \leqslant 1 - \varepsilon$. Expanding the
	  hypergeometric function as a uniformly convergent power series of
	  $\frac{1}{2} (1 {-} tx)$, we can rewrite the integral in the right-hand side
	  as
	  \begin{eqnarray}
	    & \displaystyle\sum_{i = 0}^{\infty} \frac{(1 - a)_i (a)_i}{2^i i! (2 c + a)_i}
	    \displaystyle\int_{- 1}^1 (1 {-} t x)^{2 c + a - 1 + i} (1 - t^2)^{b - 1} d t. & 
	    \nonumber
	  \end{eqnarray}
	  Owing to Lemma \ref{lem4}, this is equal to
	  \begin{eqnarray}
	    & B \left( \frac{1}{2}, b \right) \displaystyle\sum_{i = 0}^{\infty} \frac{(1 - a)_i
	    (a)_i}{2^i i! (2 c + a)_i}{}_2 F_1 \left( \begin{array}{c}
	      \displaystyle\frac{1 - 2 c - a - i}{2}, \frac{2 - 2 c - a - i}{2}\\
	      b + \frac{1}{2}
	    \end{array} ; x^2 \right) . &  \nonumber
	  \end{eqnarray}
	  Now (\ref{eqn:stz}) follows from Lemma \ref{lem:Fisum} with $\zeta = x^2$
	  and $d = 2 c + a$ and from the
	  duplication formula of the Gamma function $\Gamma (2 c) = \pi^{-
	  \frac{1}{2}} 2^{2 c - 1} \Gamma (c) \Gamma \left( c + \frac{1}{2} \right)$ as far as $-1<x<1$.
	  Finally, by the continuity, \eqref{eqn:stz} holds for $x=\pm1$ under the assumption on the parameters $a,b,c\in\mathbb{C}$.
{\hspace*{\fill}$\qed$\medskip}

{\noindent{\textit{\textbf{Proof of Lemma \ref{lem4}.}\ }}}
	  By Euler's integral representation of $_2 F_1$ again,
	  the left-hand side of \eqref{eqn:EulerRep} amounts to
	  \begin{eqnarray}
	    & \displaystyle2^{2 b - 1} (1{-} 
	    x)^{a - 1} B (b, b)_2 F_1 \left( \begin{array}{c}
	      1 - a, b\\
	      2 b
	    \end{array} ; \frac{{-} 2 x}{1 {-} x} \right) .  \label{eqn:quad} & 
	  \end{eqnarray}
	  Applying the quadratic transformation of $_2 F_1$ (cf. {\cite[Thm.
	  3.13]{andrews2000special}}):
	  \begin{eqnarray}
	    & \;_2 F_1 \left( \begin{array}{c}
	      \displaystyle1 - a, b\\
	      2 b
	    \end{array} ; u \right) = \displaystyle\left( 1 - \frac{u}{2} \right)^{a - 1}{}_2 F_1
	    \left( \begin{array}{c}
	      \displaystyle\frac{1 - a}{2}, \frac{2 - a}{2}\\
	      b + \frac{1}{2}
	    \end{array} ; \displaystyle\left( \frac{u}{2 - u} \right)^2 \right), &  \nonumber
	  \end{eqnarray}
	  with $u = \frac{{-} 2 x}{1 {-} x}$, we get the desired result after a small
	  computation using the duplication formula of the Gamma function.
{\hspace*{\fill}$\qed$\medskip}

{\noindent{\textit{\textbf{Proof of Lemma \ref{lem:Fisum}.}\ }}}
	  We list some elementary identities for the Pochhammer symbol $(y)_j =
	  \frac{\Gamma (y + j)}{\Gamma (y)}$:
	  \begin{eqnarray}
	    & 
	    (y)_i\Gamma(1-y-i)&=\;(-1)^i\Gamma(1-y)
	    ,  \label{eqn:p1}\\
	    & \left( \frac{y}{2} \right)_j \left( \frac{1 + y}{2} \right)_j & = \;
	    2^{- 2 j} (y)_{2 j},  \label{eqn:p2}\\
	    & (y)_i (1 - y)_{2 j} & = \; (1 - y - i)_{2 j} (y - 2 j)_i . 
	    \label{eqn:p3}
	  \end{eqnarray}
	  To prove the equation \eqref{eqn:iF}, we first show the following expansion:
	  \begin{eqnarray}
	    & G (a, b, d ; \zeta) = \displaystyle\sum_{j = 0}^{\infty} \frac{(1 - d)_{2 j}
	    \zeta^j}{2^{2 j} j! \left( b + \frac{1}{2} \right)_j}{}_2 F_1 \left(
	    \begin{array}{c}
	      a, 1 - a\\
	      d - 2 j
	    \end{array} ; \frac{1}{2} \right) . &  \label{eqn:Fijsum}
	  \end{eqnarray}
	  Indeed, by expanding the hypergeometric function as a power series and by
	  using $(\ref{eqn:p2})$ with $y = 1 - d - i$, we have
	  \begin{eqnarray}
	    & G (a, b, d ; \zeta) = \displaystyle\sum_{i = 0}^{\infty} \displaystyle\sum_{j = 0}^{\infty}
	    \frac{(a)_i (1 - a)_i}{2^{i + 2 j} i!j! (d)_i}  \frac{(1 - d - i)_{2
	    j}}{\left( b + \frac{1}{2} \right)_j} \zeta^j, &  \nonumber
	  \end{eqnarray}
	  which is equal to the right-hand side of $(\ref{eqn:Fijsum})$ by
	  $(\ref{eqn:p3})$ with $y = d$.
	  
	  As $\;_2 F_1 \left( \begin{array}{c}
	    a, 1 - a\\
	    c
	  \end{array} ; \displaystyle\frac{1}{2} \right) =\displaystyle \frac{2^{1 - c} \sqrt{\pi} \Gamma
	  (c)}{\Gamma \left( \frac{a + c}{2} \right) \Gamma \left( \frac{c - a + 1}{2}
	  \right)}$ (see {\cite[Thm. 5.4]{andrews2000special}} for instance), we can
	  continue as
	  \begin{eqnarray}
	    & \begin{array}{ll}
	      (\ref{eqn:Fijsum}) & = \displaystyle\sum_{j = 0}^{\infty} \frac{(1 - d)_{2 j}
	      \zeta^j}{2^{2 j} j! \left( b + \frac{1}{2} \right)_j} \cdot \frac{2^{1 -
	      d + 2 j} \sqrt{\pi} \Gamma (d - 2 j)}{\Gamma \left( \frac{a + d}{2} - j
	      \right) \Gamma \left( \frac{1 - a + d}{2} - j \right)}\\
	      &\displaystyle = \frac{2^{1 - d} \sqrt{\pi} \Gamma (d)}{\Gamma \left( \frac{a +
	      d}{2} \right) \Gamma \left( \frac{1 - a + d}{2} \right)} \displaystyle\sum_{j =
	      0}^{\infty} \frac{\left( 1 - \frac{a + d}{2} \right)_j \left( \frac{1 +
	      a - d}{2} \right)_j}{j! \left( b + \frac{1}{2} \right)_j} \zeta^j,
	    \end{array} &  \nonumber
	  \end{eqnarray}
	  where we have used $(\ref{eqn:p1})$ with 
	  $(y,i)=(1-d,2j),(1-\frac{1}{2}(a+d),j)$,
	  and $\left( \frac{1}{2}(1+a-d),j \right)$
	  in the second equality. Hence Lemma
	  \ref{lem:Fisum} is proven.
{\hspace*{\fill}$\qed$\medskip}
	\section{Sobolev-type estimate for Gegenbauer expansion}\label{sec:Sobolev}
In this section we formulate a Sobolev-type estimate for Gegenbauer expansion, by which
Theorem \ref{thm:1-1} follows readily from the special value of the integral formula (Theorem \ref{main-thm}),
as we shall see in Section \ref{sec:pfThm}.

We begin with a basic {setup}.
If $\lambda>-\frac{1}{2}$ and $\lambda\neq0$, then the Gegenbauer polynomials $\left\{ C_n^\lambda \right\}_{n\in\mathbb{N}}$
form an orthogonal basis in the Hilbert space $L_\lambda^2:=L^2\left( (-1,1),(1-x^2)^{\lambda-\frac{1}{2}}dx \right)$
with the norm\begin{equation}
	\label{eqn:vnlmd}
	v_n^\lambda:=\|C_n^\lambda\|
	^2_{L^2_\lambda}=\frac{2^{1-2\lambda}\pi\Gamma(n+2\lambda)}{n!(n+\lambda)\Gamma(\lambda)^2}.
\end{equation}
This means that any $f\in L_\lambda^2$, has an $L^2$-expansion\begin{equation}
	\label{eqn:aGegen}
	f(x)=\displaystyle\sum_{n=0}^\infty a_n(f)C_n^\lambda(x),
\end{equation}where $a_n(f)\in\mathbb{C}$ is given by
\begin{equation*}
	a_n(f)=\frac{1}{v_n^\lambda}\displaystyle\int_{-1}^1f(x)C_n^\lambda(x)(1-x^2)^{\lambda-\frac{1}{2}}dx.
\end{equation*}
\begin{proposition}
	\label{prop:1718105}(Sobolev-type inequality for Gegenbauer expansion)
	Suppose $\lambda>0$. Then there exists $D_\lambda>0$ with the following property:
	let $N:= [\lambda]+2$, the integer satisfying $\lambda+1<N\le \lambda+2$.
	Then
	\begin{equation}
		\|f\|_{L^\infty(-1,1)}\le 
		D_\lambda
		\left( \|f\|_{L^2_\lambda}+
		\|f^{(N)}\|_{L^2_{\lambda}} \right)	
		\label{eqn:Sobolev}
	\end{equation}
	for any $f\in L_\lambda^2$ such that the $N$-th derivative $f^{(N)}$ belongs to $L^2_{\lambda}$.
	Moreover, the Gegenbauer expansion
	\eqref{eqn:aGegen} converges absolutely and uniformly in $[-1,1]$ for any such $f$.
\end{proposition}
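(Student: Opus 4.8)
The plan is to convert the sup-norm estimate into an $\ell^1$-estimate on the Gegenbauer coefficients of $f$ and then to control the tail of that sum by $\|f^{(N)}\|_{L^2_\lambda}$ via $N$-fold integration by parts. Write $f=\sum_{n\ge 0}a_n(f)\,C_n^\lambda$ as in \eqref{eqn:aGegen}. For $\lambda>0$ the ultraspherical polynomial attains its maximum modulus on $[-1,1]$ at the endpoint, $\max_{x\in[-1,1]}|C_n^\lambda(x)|=C_n^\lambda(1)=\frac{(2\lambda)_n}{n!}$ (see e.g.\ \cite[Ch.~6]{andrews2000special}), so that $\bigl|\sum_n a_n(f)C_n^\lambda(x)\bigr|\le S(f):=\sum_{n\ge 0}|a_n(f)|\,C_n^\lambda(1)$ for every $x\in[-1,1]$. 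Hence it suffices to prove $S(f)\le D_\lambda\bigl(\|f\|_{L^2_\lambda}+\|f^{(N)}\|_{L^2_\lambda}\bigr)$ with $D_\lambda$ depending only on $\lambda$: once $S(f)<\infty$, the Weierstrass $M$-test and $|C_n^\lambda|\le C_n^\lambda(1)$ give absolute and uniform convergence of \eqref{eqn:aGegen} on $[-1,1]$ to a continuous function which, since $(1-x^2)^{\lambda-\frac{1}{2}}\,dx$ is a finite measure, coincides with $f$ in $L^2_\lambda$ and hence a.e., so the inequality holds for that continuous representative. Split $S(f)=\sum_{0\le n<N}+\sum_{n\ge N}$; the first (finite) sum is at most $\bigl(\sum_{0\le n<N}C_n^\lambda(1)/\sqrt{v_n^\lambda}\bigr)\|f\|_{L^2_\lambda}$ by Cauchy--Schwarz applied to $a_n(f)=\frac{1}{v_n^\lambda}\langle f,C_n^\lambda\rangle_{L^2_\lambda}$, with $\lambda$-dependent prefactor.

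For the tail, note first that $f^{(N)}\in L^2_\lambda\subseteq L^2_{\lambda+N}$ with $\|f^{(N)}\|_{L^2_{\lambda+N}}\le\|f^{(N)}\|_{L^2_\lambda}$, since $(1-x^2)^{\lambda+N-\frac{1}{2}}\le(1-x^2)^{\lambda-\frac{1}{2}}$ on $(-1,1)$; write $b_k(g)=\frac{1}{v_k^{\lambda+N}}\langle g,C_k^{\lambda+N}\rangle_{L^2_{\lambda+N}}$ for the $(\lambda+N)$-Gegenbauer coefficients of $g\in L^2_{\lambda+N}$. The key identity is
\[ b_k(f^{(N)})=2^N(\lambda)_N\,a_{k+N}(f)\qquad(k\ge 0). \]
To prove it, start from $b_k(f^{(N)})\,v_k^{\lambda+N}=\int_{-1}^1 f^{(N)}(x)\,C_k^{\lambda+N}(x)(1-x^2)^{\lambda+N-\frac{1}{2}}\,dx$ and integrate by parts $N$ times, transferring all derivatives onto the weighted polynomial. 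The Rodrigues formula \eqref{eqn:Rod}, used at parameter $\lambda+N$ and then at parameter $\lambda$, shows that $\frac{d^N}{dx^N}\bigl[(1-x^2)^{\lambda+N-\frac{1}{2}}C_k^{\lambda+N}(x)\bigr]$ is a ($k$-dependent) constant times $(1-x^2)^{\lambda-\frac{1}{2}}C_{k+N}^\lambda(x)$, so $b_k(f^{(N)})=\kappa_k\,a_{k+N}(f)$ with $\kappa_k$ independent of $f$; evaluating on the polynomial $f=C_{k+N}^\lambda$, for which $f^{(N)}=2^N(\lambda)_N\,C_k^{\lambda+N}$ and $a_{k+N}(f)=1$, fixes $\kappa_k=2^N(\lambda)_N$. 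The boundary terms vanish because $(1-x^2)^{\lambda+N-\frac{1}{2}}C_k^{\lambda+N}(x)$ and each of its first $N-1$ derivatives still carries a factor $(1-x^2)^{\lambda+N-\frac{1}{2}-j}$ with exponent $\ge\lambda+\frac{1}{2}>0$ --- hence vanishes at $x=\pm1$ --- while $f,f',\dots,f^{(N-1)}$ blow up at $x=\pm1$ strictly more slowly than $(1-x^2)^{-\lambda/2}$; this last fact follows from $f^{(N)}\in L^2_\lambda$ by iterating the Cauchy--Schwarz bound $\int_x^1|f^{(N)}(t)|\,dt\le\|f^{(N)}\|_{L^2_\lambda}\bigl(\int_x^1(1-t)^{1/2-\lambda}\,dt\bigr)^{1/2}$, which yields an endpoint blow-up of order $<\lambda/2$ for $f^{(N-1)}$ and hence, after further integrations, for all of $f,f',\dots,f^{(N-1)}$.

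Granting the identity, Cauchy--Schwarz and the Parseval identity $\sum_{k\ge 0}|b_k(f^{(N)})|^2v_k^{\lambda+N}=\|f^{(N)}\|_{L^2_{\lambda+N}}^2$ (valid since $\{C_k^{\lambda+N}\}$ is an orthogonal basis of $L^2_{\lambda+N}$) give, with $n=k+N$,
\[ \sum_{n\ge N}|a_n(f)|\,C_n^\lambda(1)=\frac{1}{2^N(\lambda)_N}\sum_{k\ge 0}|b_k(f^{(N)})|\,C_{k+N}^\lambda(1)\le\frac{\|f^{(N)}\|_{L^2_\lambda}}{2^N(\lambda)_N}\Bigl(\sum_{k\ge 0}\frac{C_{k+N}^\lambda(1)^2}{v_k^{\lambda+N}}\Bigr)^{1/2}. \]
It remains to see that this last series converges (to a finite number depending only on $\lambda$). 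By Stirling's formula applied to $C_n^\lambda(1)=\frac{(2\lambda)_n}{n!}$ and to \eqref{eqn:vnlmd} one has $C_n^\lambda(1)\asymp n^{2\lambda-1}$ and $v_n^\lambda\asymp n^{2\lambda-2}$, so the general term is $\asymp k^{2\lambda-2N}$; the series thus converges precisely when $N>\lambda+\tfrac{1}{2}$, which holds because $N=[\lambda]+2>\lambda+1$. Adding the two pieces of $S(f)$ yields \eqref{eqn:Sobolev} (take $D_\lambda$ to be the larger of the two prefactors), and --- as explained above --- the absolute and uniform convergence of \eqref{eqn:aGegen} for every admissible $f$.

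The step I expect to require the most care is the vanishing of the boundary terms, i.e.\ extracting from the single hypothesis $f^{(N)}\in L^2_\lambda$ an effective bound on the endpoint growth of $f,f',\dots,f^{(N-1)}$: morally this is transparent --- the weighted polynomial vanishes to high order at $\pm1$ while an $L^2_\lambda$ function can blow up there only mildly --- but making it precise uniformly in $\lambda>0$ and in $0\le j\le N-1$ requires the iterated Cauchy--Schwarz bookkeeping indicated above. An alternative that avoids endpoint analysis altogether is to prove the identity $b_k(f^{(N)})=2^N(\lambda)_N a_{k+N}(f)$ first for polynomials $f$ --- where it is immediate from $\frac{d^N}{dx^N}C_n^\lambda=2^N(\lambda)_N\,C_{n-N}^{\lambda+N}$ --- and then to extend it by density of polynomials in the graph norm $f\mapsto\|f\|_{L^2_\lambda}+\|f^{(N)}\|_{L^2_\lambda}$, using the $L^2$-continuity of $g\mapsto a_n(g)$ on $L^2_\lambda$ and of $g\mapsto b_k(g)$ on $L^2_{\lambda+N}$; with that route the only (routine) technical point becomes the density statement.
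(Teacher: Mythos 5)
Your proof is correct and follows essentially the same route as the paper's: split the expansion at $n=N$, bound the finitely many low coefficients by $\|f\|_{L^2_\lambda}/\sqrt{v_n^\lambda}$, and control the tail through the identity relating the $(\lambda+N)$-Gegenbauer coefficients of $f^{(N)}$ to $2^N(\lambda)_N a_n(f)$, combined with $|C_n^\lambda(x)|\le C_n^\lambda(1)$ and the Gamma-ratio asymptotics of $C_n^\lambda(1)$ versus $v_n^{\lambda+N}$. The only deviations are minor: you supply a justification (integration by parts with endpoint analysis, or density of polynomials) for the coefficient identity that the paper simply asserts as a term-by-term differentiated $L^2$-expansion, and you apply one global Cauchy--Schwarz over the tail (needing only $N>\lambda+\tfrac12$) where the paper bounds each $|a_n(f)|$ separately and sums $n^{\lambda-N}$ (needing $N>\lambda+1$), both of which hold for $N=[\lambda]+2$.
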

\begin{remark}
	Note that for $\lambda=\frac{1}{2}$, $L^2_{\lambda}=L^2(0,1)$ and \eqref{eqn:Sobolev} 
    follows from the classical
	Sobolev inequality.
	
\end{remark}
\begin{remark}
	We note that there is a continuous embedding\begin{equation*}
		L^2_\lambda\hookrightarrow L^2_{\lambda+a}\quad\mbox{for any {$a>0$}.}
	\end{equation*}
	As the proof shows, we may strengthen Proposition \ref{prop:1718105} by replacing \eqref{eqn:Sobolev} with
	\begin{equation*}
		\|f\|_
		{L^\infty(-1,1)}\le D_{\lambda}
		\left( \|f\|_
		{L^2_\lambda}+\|f^{(N)}\|_
		{L^2_{\lambda+N}} \right)
	\end{equation*}
	and $f^{(N)}\in L^2_\lambda$ with $f^{(N)}\in {{L^2_{\lambda+N}}}$.
\end{remark}
The rest of this section is devoted to the proof of Proposition \ref{prop:1718105}. We begin with the following.
\begin{lemma}
	Suppose $\mathbb{N}\ni N>\lambda>0$.
	Then there is a constant $d_{\lambda,N}>0$ such that \begin{equation*}
        \|C_n^\lambda\|_{L^\infty(-1,1)}\le d_{\lambda,N}\|C^{\lambda+N}_{n-N}\|_{L^2_{\lambda+N}}n^{\lambda-N}\mbox{ for all $n\ge N$.}
	\end{equation*}
	\label{lem:1718109}
\end{lemma}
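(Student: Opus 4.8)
The plan is to reduce the lemma to the two explicit closed forms for $\|C_n^\lambda\|_{L^\infty(-1,1)}$ and for $\|C_{n-N}^{\lambda+N}\|_{L^2_{\lambda+N}}$, and then to compare them via Stirling's formula. For the first, I would invoke the classical fact that for $\lambda>0$ the function $|C_n^\lambda(x)|$ attains its maximum over $[-1,1]$ at the endpoints $x=\pm1$ (see, e.g., \cite{andrews2000special} or Szeg\H{o}'s monograph on orthogonal polynomials), so that
\[
\|C_n^\lambda\|_{L^\infty(-1,1)}=C_n^\lambda(1)=\frac{(2\lambda)_n}{n!}=\frac{\Gamma(n+2\lambda)}{\Gamma(2\lambda)\,\Gamma(n+1)};
\]
only the upper bound is actually needed, and any estimate of this same order in $n$ would suffice. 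For the second, the norm formula \eqref{eqn:vnlmd}, applied with $n\mapsto n-N$ and $\lambda\mapsto\lambda+N$, gives
\[
\|C_{n-N}^{\lambda+N}\|_{L^2_{\lambda+N}}^2=v_{n-N}^{\lambda+N}=\frac{2^{\,1-2\lambda-2N}\,\pi\,\Gamma(n+2\lambda+N)}{\Gamma(n-N+1)\,(n+\lambda)\,\Gamma(\lambda+N)^2};
\]
since $\lambda+N>0$, the polynomial $C_{n-N}^{\lambda+N}$ is of genuine degree $n-N$ and this quantity is a strictly positive real number for every $n\ge N$. (The particular combination $C_{n-N}^{\lambda+N}$ arises because $\tfrac{d^N}{dx^N}C_n^\lambda=2^N(\lambda)_N\,C_{n-N}^{\lambda+N}$, though this identity is not needed for the proof of the lemma itself.)

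Next I would examine the ratio
\[
R_n:=\frac{\|C_n^\lambda\|_{L^\infty(-1,1)}}{n^{\lambda-N}\,\|C_{n-N}^{\lambda+N}\|_{L^2_{\lambda+N}}}\qquad(n\ge N),
\]
working with $R_n^2$ for convenience. By the two displays above, $R_n^2$ equals a fixed positive constant (assembled from $\Gamma(2\lambda)$, $\Gamma(\lambda+N)$, $\pi$, and powers of $2$) times $n^{2(N-\lambda)}$ times the $n$-dependent combination of Gamma functions
\[
\frac{\Gamma(n+2\lambda)^2\,(n+\lambda)\,\Gamma(n-N+1)}{\Gamma(n+1)^2\,\Gamma(n+2\lambda+N)}.
\]
Applying the elementary asymptotics $\Gamma(n+a)/\Gamma(n+b)\sim n^{a-b}$, a consequence of Stirling's formula, to each quotient of Gamma functions, one finds that this last combination is asymptotic to $n^{2\lambda-2N}$, which cancels exactly against the factor $n^{2(N-\lambda)}$. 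Hence $R_n^2$—and therefore $R_n$—converges as $n\to\infty$ to a finite, strictly positive limit.

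Finally, since $(R_n)_{n\ge N}$ is a sequence of positive real numbers with a finite limit, it is bounded; setting $d_{\lambda,N}:=\sup_{n\ge N}R_n\in(0,\infty)$, the asserted inequality holds for all $n\ge N$ by the very definition of $d_{\lambda,N}$. The only ingredient brought in from outside the paper is the endpoint-maximum property of $C_n^\lambda$ for $\lambda>0$, which is classical; the remainder is routine bookkeeping with Gamma functions, and the one place that demands attention is keeping the powers of $n$ straight, so as to verify that the factor $n^{2(N-\lambda)}$ is precisely what is required to make the quotient bounded.
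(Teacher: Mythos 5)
Your proposal is correct and follows essentially the same route as the paper: bound $\|C_n^\lambda\|_{L^\infty(-1,1)}$ by $C_n^\lambda(1)=\Gamma(n+2\lambda)/(n!\,\Gamma(2\lambda))$, compute the ratio against $v_{n-N}^{\lambda+N}$ from \eqref{eqn:vnlmd}, and use $\Gamma(n+a)/\Gamma(n+b)\sim n^{a-b}$ to see that the ratio, after dividing by $n^{\lambda-N}$, stays bounded. Your bookkeeping of the Gamma factors and the power of $n$ matches the paper's computation, so no changes are needed.
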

\begin{proof}
	We recall from \cite[(6.4.11)]{andrews2000special} that
	\begin{equation}
		\label{eqn:Cnbdd}
		{\left|C_n^\lambda(x)\right|}\le C_n^\lambda(1)=\frac{\Gamma(n+2\lambda)}{n!\Gamma(2\lambda)}\mbox{ for all $-1\le x\le 1$.}
	\end{equation}
	By \eqref{eqn:vnlmd}
	\begin{equation*}
		\frac{C^\lambda_n(1)^2}{v_{n-N}^{\lambda+N}}
		=\frac{\Gamma(\lambda+N)^2}{\Gamma(2\lambda)^2\,2^{1-2\lambda-2N}\pi}\cdot
		\frac{\Gamma(n+2\lambda)^2(n-N)!(n+\lambda)}{\left( n! \right)^2\Gamma(n+2\lambda+N)}.
	\end{equation*}
	The first term depends only on $\lambda$ and $N$,
	and the second term has the following asymptotics: $n^{2\lambda-2N}$
	as $n$ tends to $\infty$ because\begin{equation*}
		\frac{\Gamma(n+a)}{\Gamma(n+b)}\sim n^{a-b}\quad\mbox{as $n\to\infty$.}
	\end{equation*}
	Now Lemma \ref{lem:1718109} follows from \eqref{eqn:Cnbdd}.
\end{proof}

We are ready to complete the proof of Proposition \ref{prop:1718105}.

{\noindent{\textit{\textbf{Proof of Proposition \ref{prop:1718105}.}\ }}}
	Let $N:= [\lambda]+2$ be as in Proposition \ref{prop:1718105}.
	Iterating the differential formula\begin{equation*}
		\frac{d}{dx}C_n^\lambda(x)=2\lambda C^{\lambda+1}_{n-1}(x),
	\end{equation*}we {get} the following $L^2$-expansion:\begin{equation*}
		f^{(N)}(x)=2^N(\lambda)_N\displaystyle\sum_{n=N}^\infty a_n(f)C_{n-N}^{\lambda+N}(x).
	\end{equation*}Thus, for all $n\ge N$, we have\begin{equation*}
		{\left|a_n(f)\right|}\le\frac{1}{2^N(\lambda)_N}
		\frac{\|f^{(N)}\|_{L^2_{\lambda+N}}}
		{\|C^{\lambda+N}_{n-N}\|_
		{L^2_{\lambda+N}}}.
	\end{equation*}
	By Lemma \ref{lem:1718109}
	\begin{equation*}
		{\left|a_n(f)\right|}\|C^\lambda_n\|_{L^\infty(-1,1)}\le\frac{d_{\lambda,N}}
		{2^N(\lambda)_N}\|f^{(N)}\|_{L^2_{\lambda+N}}n^{\lambda-N}.
	\end{equation*}Therefore
	the right-hand side of
	\eqref{eqn:aGegen} converges uniformly in $[-1,1]$ because $\lambda-N<-1$.

	For $0\le n<N$, we use ${\left|a_n(f)\right|}\sqrt{v_n^\lambda}\le\|f\|_{L_\lambda^2}$ to conclude
	\begin{equation*}
		\left(\sum_{n=0}^{N-1}+\sum_{n=N}^\infty  \right) a_n(f)\|C_n^\lambda\|_{L^\infty(-1,1)}\le D_\lambda\left( \|f\|_{L^2_\lambda}+\|f^{(N)}\|_{L^2_{\lambda+N}} \right),
	\end{equation*}where we set\begin{equation*}
		D_\lambda:=\max\left( \frac{d_{\lambda,N}}{2^N(\lambda)_N}\displaystyle\sum_{n=N}^{\infty}n^{\lambda-N},\left\{ \frac{\|C_n^\lambda\|_{L^\infty(-1,1)}}{\sqrt{v_n^\lambda}} \right\}_{n=0,\cdots,N-1} \right).
	\end{equation*}Hence Proposition \ref{prop:1718105} is proved.
{\hspace*{\fill}$\qed$\medskip}
 	\section{Proof of Theorem \ref{thm:1-1}}\label{sec:pfThm}
We obtain from Theorem \ref{main-thm} the following.
\begin{proposition}\label{prop:4:6d1}
	With the same assumption as in Theorem 
	\ref{main-thm}, we have
	\begin{equation*}
	\begin{array}{ll}
     \displaystyle\int_{- 1}^1 \displaystyle\int_{- 1}^1 {\left( s {-} xt \right)}_-^{2 \nu} u_{\ell}^{\lambda} (s)
     u_m^{\mu} (t) d s d t   &=(-1)^{\ell+m}{{B^{\lambda,\mu,\nu}_{\ell,m}\left( x \right)}},\nonumber\\
     \displaystyle\int_{- 1}^1 \displaystyle\int_{- 1}^1 | s {-} xt |^{2 \nu} u_{\ell}^{\lambda} (s)
    u_m^{\mu} (t) d s d t   
    &=\left( 1+(-1)^{\ell+m}
    \right){{B^{\lambda,\mu,\nu}_{\ell,m}\left( x \right)}},\nonumber
\\
	\displaystyle\int_{- 1}^1 \displaystyle\int_{- 1}^1 | s {-} xt |^{2 \nu}\operatorname{sgn}\left( s{-} xt \right) u_{\ell}^{\lambda} (s)
	u_m^{\mu} (t) d s d t&=\left( 1-(-1)^{\ell+m} 
	\right){{B^{\lambda,\mu,\nu}_{\ell,m}\left( x \right)}}.\nonumber
	\end{array}
	\end{equation*}
\end{proposition}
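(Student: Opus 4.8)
The plan is to obtain all three identities directly from Theorem \ref{main-thm}, combining the parity of the Gegenbauer polynomials with the elementary decomposition \eqref{eqn:Riesz}; no new integral has to be evaluated.

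\emph{Step 1: the $(s-xt)_-^{2\nu}$ integral.} Since $x$ is held fixed, the change of variables $(s,t)\mapsto(-s,-t)$ maps the square $[-1,1]^2$ onto itself and sends $s-xt$ to $-(s-xt)$, hence interchanges $(s-xt)_+^{2\nu}$ and $(s-xt)_-^{2\nu}$. On the other hand $C_\ell^\lambda(-s)=(-1)^\ell C_\ell^\lambda(s)$ and $(1-s^2)^{\lambda-\frac12}$ is even, so $u_\ell^\lambda(-s)=(-1)^\ell u_\ell^\lambda(s)$ and likewise $u_m^\mu(-t)=(-1)^m u_m^\mu(t)$. Applying this substitution to the defining integral of $B^{\lambda,\mu,\nu}_{\ell,m}(x)$ in Theorem \ref{main-thm} therefore gives
\[
\int_{-1}^1\int_{-1}^1 (s-xt)_-^{2\nu}\,u_\ell^\lambda(s)u_m^\mu(t)\,ds\,dt=(-1)^{\ell+m}B^{\lambda,\mu,\nu}_{\ell,m}(x),
\]
which is the first claimed formula. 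The hypothesis $\operatorname{Re}\nu>0$ ensures $(s-xt)_\pm^{2\nu}\in L^1([-1,1]^2)$, so the change of variables is legitimate.

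\emph{Step 2: the remaining two.} For $\operatorname{Re}\nu>0$ the identity \eqref{eqn:Riesz} holds pointwise with $y=s-xt$ and $\alpha=2\nu$, i.e. $|s-xt|^{2\nu}=(s-xt)_+^{2\nu}+(s-xt)_-^{2\nu}$ and $|s-xt|^{2\nu}\operatorname{sgn}(s-xt)=(s-xt)_+^{2\nu}-(s-xt)_-^{2\nu}$. Since $u_\ell^\lambda,u_m^\mu$ are bounded on $[-1,1]$ and each summand is integrable, linearity of the integral combined with Theorem \ref{main-thm} and Step 1 yields
\[
\int_{-1}^1\int_{-1}^1|s-xt|^{2\nu}\,u_\ell^\lambda(s)u_m^\mu(t)\,ds\,dt=\bigl(1+(-1)^{\ell+m}\bigr)B^{\lambda,\mu,\nu}_{\ell,m}(x)
\]
and
\[
\int_{-1}^1\int_{-1}^1|s-xt|^{2\nu}\operatorname{sgn}(s-xt)\,u_\ell^\lambda(s)u_m^\mu(t)\,ds\,dt=\bigl(1-(-1)^{\ell+m}\bigr)B^{\lambda,\mu,\nu}_{\ell,m}(x),
\]
which are the second and third formulae.

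There is essentially no obstacle here: the only points requiring a word of care are the integrability and measurability needed to justify the change of variables in Step 1 and the term-by-term splitting in Step 2, and both are immediate from $\operatorname{Re}\nu>0$ together with the boundedness of $u_\ell^\lambda,u_m^\mu$ on $[-1,1]$. One could equally phrase the whole argument at the level of the meromorphic continuation in $\nu$, as in the proof of Theorem \ref{main-thm}, but the direct pointwise argument already covers the stated range of parameters.
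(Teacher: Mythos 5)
Your argument is correct and is essentially the paper's own proof: the first identity comes from $y_-^{\alpha}=(-y)_+^{\alpha}$ together with the parity $u_\ell^{\lambda}(-s)=(-1)^{\ell}u_\ell^{\lambda}(s)$ (your substitution $(s,t)\mapsto(-s,-t)$), and the other two from the change of basis \eqref{eqn:Riesz}. One tiny slip in a side remark: $u_\ell^{\lambda}$ need not be bounded on $[-1,1]$ when $\operatorname{Re}\lambda<\tfrac{1}{2}$, but the term-by-term splitting is still justified because $(s-xt)_\pm^{2\nu}$ is bounded on the square for $\operatorname{Re}\nu>0$ while the weights $(1-s^2)^{\lambda-\frac12}(1-t^2)^{\mu-\frac12}$ are integrable.
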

\begin{proof}
    Taking into account
    that $y^\alpha_-=(-y)_+^\alpha$
    and $u^\lambda_\ell(-s)=(-1)^\ell u^\lambda_\ell(s)$,
    one derives the first integral formula from
    Theorem \ref{main-thm}. In turn, the second
    and the third ones hold by the change of the basis
    $\left\{ {\left|y\right|}^\alpha,
    {\left|y\right|}^\alpha{\operatorname{sgn}}{\left(y\right)}\right\}$
    to $\left\{ y^\alpha_+,y^\alpha_- \right\}$,
    see \eqref{eqn:Riesz}.
\end{proof}
 \begin{proposition}
  \label{cor:1}
  Let $\varepsilon\in\left\{ 0,1 \right\}$ and $\ell,m\in\mathbb{N}$. Suppose ${\operatorname{Re}\lambda},{\operatorname{Re}\mu}>-\frac{1}{2}$ and ${\operatorname{Re}\nu}>0$.
  \begin{alignat}{1}
    &\int_{- 1}^1 \displaystyle\int_{- 1}^1 | s {-} t |^{2 \nu}{\operatorname{sgn}}^\varepsilon(s{-} t) (1 - s^2)^{\lambda -
    \frac{1}{2}} (1 - t^2)^{\mu - \frac{1}{2}} C_{\ell}^{\lambda} (s)
    C_m^{\mu} (t) d s d t   \nonumber\\
    &=\frac{1}{2}\left( 1+(-1)^{\ell+m+\varepsilon} \right){{b^{\lambda,\mu,\nu}_{\ell,m}}} v_\ell^\lambda v^\mu_m.
	\label{eqn:cor:1}
  \end{alignat}
\end{proposition}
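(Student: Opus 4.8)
The plan is to obtain \eqref{eqn:cor:1} directly from the $x=1$ specialization of Theorem~\ref{main-thm} together with Proposition~\ref{prop:4:6d1}, after which only a bookkeeping of Gamma factors remains.

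\emph{Step 1: reduction to $B^{\lambda,\mu,\nu}_{\ell,m}(1)$.} By the definition of $u^\lambda_\ell$ we have $(1-s^2)^{\lambda-\frac12}C^\lambda_\ell(s)=\frac{\Gamma(2\lambda+\ell)}{2^{2\lambda-1}\ell!\,\Gamma(\lambda)}\,u^\lambda_\ell(s)$, and likewise in the $t$-variable. Applying the change of basis \eqref{eqn:Riesz} with $\alpha=2\nu$, i.e. $|s-t|^{2\nu}\operatorname{sgn}^\varepsilon(s-t)=(s-t)^{2\nu}_++(-1)^\varepsilon(s-t)^{2\nu}_-$, together with Proposition~\ref{prop:4:6d1} at $x=1$, the left-hand side of \eqref{eqn:cor:1} equals
\[
\frac{\Gamma(2\lambda+\ell)\,\Gamma(2\mu+m)}{2^{2\lambda+2\mu-2}\,\ell!\,m!\,\Gamma(\lambda)\,\Gamma(\mu)}\bigl(1+(-1)^{\ell+m+\varepsilon}\bigr)\,B^{\lambda,\mu,\nu}_{\ell,m}(1).
\]

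\emph{Step 2: evaluation of $B^{\lambda,\mu,\nu}_{\ell,m}(1)$.} Set $x=1$ in \eqref{eqn:main+}. The resulting ${}_2F_1$ at $1$ is evaluated by Gauss's summation theorem ${}_2F_1(A,B;C;1)=\Gamma(C)\Gamma(C-A-B)\big/\bigl(\Gamma(C-A)\Gamma(C-B)\bigr)$; this applies because here $C-A-B=\lambda+\mu+2\nu+1$, whose real part is positive throughout the region $\operatorname{Re}\lambda,\operatorname{Re}\mu>-\frac12$, $\operatorname{Re}\nu>0$, so no separate analytic continuation is needed. After the factor $\Gamma(\mu+m+1)$ cancels one finds
\[
B^{\lambda,\mu,\nu}_{\ell,m}(1)=\frac{(-1)^m\pi^2\,\Gamma(2\nu+1)\,\Gamma(\lambda+\mu+2\nu+1)}{2^{2\nu+1}\,\Gamma\!\bigl(\nu-\tfrac{\ell+m}2+1\bigr)\Gamma\!\bigl(\lambda+\nu+\tfrac{\ell-m}2+1\bigr)\Gamma\!\bigl(\mu+\nu+\tfrac{m-\ell}2+1\bigr)\Gamma\!\bigl(\lambda+\mu+\nu+\tfrac{\ell+m}2+1\bigr)}.
\]
The key observation is that the four denominator Gammas are exactly the four factors of $\prod_{\delta,\varepsilon\in\{\pm1\}}\Gamma\!\bigl(\nu+1+\tfrac{\lambda+\mu}2+\delta\tfrac{\lambda+\ell}2+\varepsilon\tfrac{\mu+m}2\bigr)$ appearing in \eqref{eqn:alm}, obtained by expanding the four sign choices. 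Comparing with \eqref{eqn:alm} then yields the compact relation $B^{\lambda,\mu,\nu}_{\ell,m}(1)=\dfrac{\pi^2}{2(\lambda+\ell)(\mu+m)\,\Gamma(\lambda)\,\Gamma(\mu)}\,b^{\lambda,\mu,\nu}_{\ell,m}$.

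\emph{Step 3: collecting constants.} Substitute this into the expression from Step~1 and rewrite the norm formula \eqref{eqn:vnlmd} as $\dfrac{\Gamma(2\lambda+\ell)}{2^{2\lambda-1}\ell!\,\Gamma(\lambda)}=\dfrac{(\lambda+\ell)\Gamma(\lambda)}{\pi}\,v^\lambda_\ell$ (and similarly with $\ell,\lambda$ replaced by $m,\mu$). All the loose constants then cancel and the left-hand side of \eqref{eqn:cor:1} collapses to $\tfrac12\bigl(1+(-1)^{\ell+m+\varepsilon}\bigr)\,b^{\lambda,\mu,\nu}_{\ell,m}\,v^\lambda_\ell\,v^\mu_m$, which is the assertion. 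The only point beyond routine Gamma-function manipulation is the legitimacy of Gauss's theorem at $z=1$, which is guaranteed by $\operatorname{Re}(\lambda+\mu+2\nu+1)>0$ under the stated hypotheses.
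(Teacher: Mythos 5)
Your proposal is correct and follows essentially the same route as the paper: specialize Theorem \ref{main-thm} at $x=1$ via Proposition \ref{prop:4:6d1}, evaluate the ${}_2F_1$ at $1$ by Gauss's summation theorem (legitimate since $\operatorname{Re}(\lambda+\mu+2\nu+1)>0$ under the hypotheses), and then match constants using \eqref{eqn:alm} and \eqref{eqn:vnlmd}. Your intermediate identity $B^{\lambda,\mu,\nu}_{\ell,m}(1)=\frac{\pi^2}{2(\lambda+\ell)(\mu+m)\Gamma(\lambda)\Gamma(\mu)}\,b^{\lambda,\mu,\nu}_{\ell,m}$ and the final bookkeeping check out.
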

  \begin{proof}
	  Since $_2F_1\left( \begin{array}[]{c}
		  a,b\\c
	  \end{array};
	  1\right)=
	  \displaystyle
	  \frac{\Gamma(c-a-b)\Gamma(c)}{\Gamma(c-a)\Gamma(c-b)}$ if ${\operatorname{Re}c}>\operatorname{Re}\,(a+b)$, the
	  left-hand side of \eqref{eqn:cor:1} amounts to
  \begin{equation*}
	  \displaystyle
	  \frac{{\left((-1)^m+(-1)^{\ell+\varepsilon}\right)}\pi^{\frac{1}{2}}
{
     (2 \lambda)_{\ell} (2 \mu)_m \Gamma \left( \lambda +
    \frac{1}{2} \right) \Gamma \left( \mu + \frac{1}{2} \right) \Gamma \left(
    \nu + \frac{1}{2} \right)\Gamma(\nu+1) \Gamma (\lambda + \mu + 2 \nu + 1)}
  }{2
{
	\Gamma(1+ \nu-\frac{\ell+m}{2})
	    \ell !m!
    \Gamma \left( \lambda + \nu + \frac{\ell - m}{2} + 1 \right) \Gamma \left(
    \mu + \nu - \frac{\ell - m}{2} + 1 \right) \Gamma \left( \lambda + \mu +
    \nu + \frac{\ell + m}{2} + 1 \right)}
  }
  \end{equation*}
  from the second and third {formul\ae} of Proposition \ref{prop:4:6d1} with $x=1$. By the definition
  \eqref{eqn:alm} of ${{b^{\lambda,\mu,\nu}_{\ell,m}}}$
 and the formula \eqref{eqn:vnlmd} of $v^\lambda_\ell$,
the proposition follows.
  \end{proof}
We are ready to complete the proof of Theorem \ref{thm:1-1}.\\
{\noindent{\textit{\textbf{Proof of Theorem \ref{thm:1-1}.}\ }}}
Owing to Proposition \ref{prop:1718105}, we can deduce
Theorem \ref{thm:1-1}
from Proposition \ref{cor:1}
under the assumption on $(\lambda,\mu,\nu)$
because for any $m,n\in\mathbb{N}$ with $m\le\lambda+2$ and $n\le\mu+2$, we have\begin{equation*}
	\frac{\partial^{m+n}}{\partial s^m\partial t^n}{\left|s{-} t\right|}^{2\nu}\in L^2\left( (-1,1)^2,(1-s^2)^{\lambda+m}(1-t^2)^{\mu+n}dsdt \right).
\end{equation*}
Hence Theorem \ref{thm:1-1} is proved.
{\hspace*{\fill}$\qed$\medskip}

\begin{remark}
\label{rem:alt-Thm11}
	  Kobayashi--Mano obtained an analogous formula
 to \eqref{eqn:cor:1} in \cite[Lem.~7.9.1]{kobayashi2011schrodinger},
{}from which Proposition \ref{cor:1} follows 
 by the change of basis \eqref{eqn:Riesz}
 and thus we could give an alternative proof of Theorem \ref{thm:1-1}.  
Our proof of \eqref{eqn:cor:1} is different from \cite[Chap.~7]{kobayashi2011schrodinger}, 
 where they showed the following integral formula
 \cite[(7.4.11)]{kobayashi2011schrodinger}
 as a first step:
 for $\operatorname{Re}\lambda >-1$, 
 $\operatorname{Re}\nu >- \frac 12$ and $|x|< 1$, 
\begin{equation}
\label{eqn:KMP}
 x_-^{\lambda} \ast h_k^{\nu}(x)
 =
 q_k(\lambda,\nu)(1-x^2)^{\frac 1 2(\lambda+\nu+\frac 1 2)}P_{\nu+k-\frac 1 2}^{-(\lambda+\nu+\frac 1 2)}(-x).  
\end{equation}
Here $h_k^{\nu}(x):=(1-x^2)^{\nu-\frac 1 2}C_k^{\nu}(x)$
 for $|x|<1$;
 $=0$ otherwise, 
$P_{\alpha}^{\beta}(x)$ denotes the associated Legendre function, 
 and the coefficient $q_k(\lambda,\nu)$ is given explicitly
 by the Gamma functions.  
The integral formula \eqref{eqn:KMP} immediately implies
 closed formul{\ae} for 
\[
  x_+^{\lambda} \ast h_k^{\nu} \quad \text{and}\quad
  (x \pm i 0)^{\lambda} \ast h_k^{\nu}
\]
because $C_k^{\nu}(-x) =(-1)^k C_k^{\nu}(x)$.  
With the notation as in \eqref{eqn:vnlmd}, 
 the integral formula \eqref{eqn:KMP} implies an expansion
\begin{equation}
\label{eqn:KMPC}
  (x-y)_-^{\lambda}
  =
  \sum_{k=0}^{\infty} \frac{q_k(\lambda,\nu)}{v_k^{\lambda}} 
  (1-x^2)^{\frac 1 2(\lambda+\nu+\frac 1 2)}
  P_{\nu+k-\frac  1 2}^{-(\lambda+\nu+\frac 12)}(-x) C_k^{\nu}(y)
\end{equation}
for any ${\mathbb{R}} \ni \nu > -\frac 1 2$ 
 with $\nu \ne 0$, 
 and similarly for $(x-y)_+^{\lambda}$ and $(x-y\pm i 0)^{\lambda}$.  
Kobayashi--Mano's work \cite{kobayashi2011schrodinger} appeared 
 in arXiv:0712.1769.  
Afterwards, 
 Cohl \cite{cohl2013} and Szmytkowski \cite{szmytkowski2011} obtained
 similar results to \eqref{eqn:KMP} and \eqref{eqn:KMPC}, 
 but not the double Gegenbauer expansion
 as in \eqref{eqn:cor:1}.  
To be more precise, 
 Szmytkowski \cite[(2.5), (2.7)]{szmytkowski2011} rediscovered
 the same formula with \eqref{eqn:KMP} by using from Cohl \cite[Thm.~2.1]{cohl2013}.  
We note that \cite[Thm.~2.1]{cohl2013}
 also follows from Kobayashi--Mano's formula \cite[Lem.~7.9.1]{kobayashi2011schrodinger}
 by  change of basis \eqref{eqn:Riesz2} and analytic continuation.  
\end{remark}

 	\section{Proof of Corollary \ref{cor:CCint}}\label{sec:pfOfCol}
It is sufficient to prove the following.
\begin{lemma}
  \label{lem:VXGPjH8UqL-35d3}
  Suppose $\lambda,\mu,\nu,\beta\in\mathbb{C}$ and $\ell,m\in\mathbb{N}$
  satisfy
  ${\operatorname{Re}\beta} > - 1, \operatorname{Re}\,(\mu + m )> - 1$, and $\operatorname{Re}\,(\lambda+\mu  + 2 \nu+ \beta
  + 2) > 0$. Then we have
  \begin{alignat*}{1}
       &\int_0^1 x^{2 \mu + m + 1} (1 - x^2)^\beta B^{\lambda, \mu, \nu}_{\ell, m}
       (x) d x \\
       &= \frac{(- 1)^m 2^{- 2 \nu-2} \pi^2 \Gamma (2 \nu + 1) \Gamma (\beta + 1)
       \Gamma (\lambda+\mu + 2 \nu +\beta + 2)}{\Gamma \left( \nu - \frac{\ell
       + m}{2}+1 \right) \Gamma \left( \lambda + \nu + \frac{\ell - m}{2} + 1
       \right) \Gamma \left( \mu + \nu + \beta + \frac{m - \ell}{2} + 2 \right)
       \Gamma \left( \lambda+\mu + \nu + \beta + \frac{m + \ell}{2} + 2 \right)}.
  \end{alignat*}
\end{lemma}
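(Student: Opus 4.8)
The plan is to substitute the closed formula \eqref{eqn:main+} for $B^{\lambda,\mu,\nu}_{\ell,m}(x)$ and reduce the resulting one-dimensional integral to a classical Euler-type integral of a ${}_2F_1$. Theorem \ref{main-thm} writes $B^{\lambda,\mu,\nu}_{\ell,m}(x)$ as an explicit product of Gamma factors times $x^m\,{}_2F_1\!\bigl(-\nu+\tfrac{\ell+m}{2},\,-\lambda-\nu+\tfrac{m-\ell}{2};\,\mu+m+1;\,x^2\bigr)$, where the denominator of that Gamma prefactor contains a factor $\Gamma(\mu+m+1)$ that will be important below. Multiplying by $x^{2\mu+m+1}(1-x^2)^\beta$, collecting the powers of $x$ into $x^{2(\mu+m)+1}$, and substituting $u=x^2$, I would reduce the left-hand side of the lemma to that Gamma prefactor times
\begin{equation*}
  \frac12\int_0^1 u^{\mu+m}(1-u)^\beta\;
  {}_2F_1\!\left(\begin{array}{c}-\nu+\frac{\ell+m}{2},\ -\lambda-\nu+\frac{m-\ell}{2}\\ \mu+m+1\end{array};u\right)du .
\end{equation*}

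Next I would apply the standard Euler integral
\begin{equation*}
  \int_0^1 u^{c-1}(1-u)^{\rho-1}\,{}_2F_1(a,b;\gamma;u)\,du
  =\frac{\Gamma(c)\Gamma(\rho)}{\Gamma(c+\rho)}\,{}_3F_2\!\left(\begin{array}{c}a,b,c\\ \gamma,c+\rho\end{array};1\right),
\end{equation*}
valid for $\operatorname{Re}c>0$, $\operatorname{Re}\rho>0$ and absolute convergence of the ${}_3F_2$ at $1$, taken with $c=\gamma=\mu+m+1$, $\rho=\beta+1$, $a=-\nu+\tfrac{\ell+m}{2}$, $b=-\lambda-\nu+\tfrac{m-\ell}{2}$. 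The key observation is that the lower parameter $\gamma$ of the ${}_2F_1$ coincides with $c$, so $(c)_n/(\gamma)_n\equiv1$ and the series at $1$ collapses to ${}_2F_1(a,b;c+\rho;1)$, which Gauss's summation theorem evaluates as $\Gamma(c+\rho)\Gamma(c+\rho-a-b)\big/\bigl(\Gamma(c+\rho-a)\Gamma(c+\rho-b)\bigr)$. A short check shows that, because $c+\rho=\mu+m+\beta+2$ and the contributions of $\ell$ and $m$ to $a+b$ cancel, the hypotheses needed for these two classical formulas are precisely $\operatorname{Re}(\mu+m)>-1$, $\operatorname{Re}\beta>-1$ and $\operatorname{Re}(\lambda+\mu+2\nu+\beta+2)>0$ — exactly the assumptions of the lemma.

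It then remains to multiply everything out. The factor $\Gamma(c)=\Gamma(\mu+m+1)$ produced by $\Gamma(c)\Gamma(\rho)/\Gamma(c+\rho)$ cancels the $\Gamma(\mu+m+1)$ in the prefactor's denominator; the powers $2^{-2\nu-1}$ from the prefactor and $2^{-1}$ from the substitution combine into $2^{-2\nu-2}$; the numerator assembles into $(-1)^m 2^{-2\nu-2}\pi^2\Gamma(2\nu+1)\Gamma(\beta+1)\Gamma(\lambda+\mu+2\nu+\beta+2)$; and the surviving denominator Gamma-arguments are $\nu-\tfrac{\ell+m}{2}+1$, $\lambda+\nu+\tfrac{\ell-m}{2}+1$, $c+\rho-a=\mu+\nu+\beta+\tfrac{m-\ell}{2}+2$ and $c+\rho-b=\lambda+\mu+\nu+\beta+\tfrac{m+\ell}{2}+2$, which is exactly the claimed right-hand side. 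If one prefers to read $B^{\lambda,\mu,\nu}_{\ell,m}$ through its integral definition in Theorem \ref{main-thm} (which carries the extra restrictions $\operatorname{Re}\lambda,\operatorname{Re}\mu>-\tfrac12$, $\operatorname{Re}\nu>0$), I would establish the identity first on that smaller open set and then extend it by analytic continuation in $(\lambda,\mu,\nu,\beta)$, both sides being meromorphic.

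The bulk of this is mechanical, so I expect the main points requiring care to be: (i) justifying the term-by-term integration of the ${}_2F_1$-series up to the endpoint $u=1$ in the Euler integral — which I would handle either by quoting the classical identity or by first taking $\operatorname{Re}\beta$ large and then continuing in $\beta$; and (ii) verifying that the accumulated Gamma-function bookkeeping, including the constant $2^{-2\nu-2}\pi^2$, reproduces the stated closed form exactly.
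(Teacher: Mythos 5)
Your proposal is correct and follows essentially the same route as the paper: substitute the closed formula \eqref{eqn:main+}, absorb the powers of $x$ via $u=x^2$, and evaluate $\int_0^1 u^{\mu+m}(1-u)^\beta\,{}_2F_1(a,b;\mu+m+1;u)\,du$ in closed form, after which the $\Gamma(\mu+m+1)$ factors cancel. The only cosmetic difference is that the paper quotes this integral directly as the tabulated formula \eqref{eqn:Fint} from Erd\'elyi et al., whereas you re-derive it from the Euler integral (with $c=\gamma$) plus Gauss summation.
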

     To prove Lemma \ref{lem:VXGPjH8UqL-35d3}, we use \cite[20.2~(4)]{MR0065685}:
     \begin{equation}
         \displaystyle\int_0^1 y^{\gamma - 1} (1 - y)^{\rho - 1} \,_2 F_1 \left(
         \begin{array}{c}
           \alpha, \beta\\
           \gamma
         \end{array} ; y \right) d y =\displaystyle \frac{\Gamma (\gamma) \Gamma
         (\rho) \Gamma (\gamma + \rho - \alpha - \beta)}{\Gamma (\gamma + \rho
         - \alpha) \Gamma (\gamma + \rho - \beta)},\\
	     \label{eqn:Fint}
     \end{equation}
     if ${\operatorname{Re}\gamma} > 0, {\operatorname{Re}\rho} > 0,
         \operatorname{Re}\,(\gamma + \rho - \alpha - \beta) > 0$.
\begin{proof}
	By the change of variables $x=y^2$, the formula \eqref{eqn:Fint} shows
  \[ 
	 \begin{array}{c}
       \displaystyle \int_0^1 x^{2 \mu + 2 m + 1} (1 - x^2)^\beta \,_2 F_1  \left( 
       \begin{array}{c}
         -\nu+\frac{\ell + m}{2} , -\lambda-\nu-\frac{ \ell-m}{2} \\
         \mu + m + 1
       \end{array} ; x^2  \right) d x\\
       =\displaystyle \frac{\Gamma (\mu + m
       + 1) \Gamma (\beta + 1)
       \Gamma (\lambda+\mu+ 2\nu + \beta + 2  )}
       {2 \Gamma \left( \mu + \nu+ \beta- \frac{\ell-m}{2}  + 2 
       \right) \Gamma \left( \lambda+\mu +\nu+ \beta+ \frac{\ell+m}{2}  + 2 
       \right)} .
     \end{array} \]
     Now the lemma follows from Theorem \ref{main-thm}.
\end{proof}
 	\section{{{Special values and Selberg-type integrals}}}\label{sec:4}
	In this section, we examine the relationship between
	Theorem \ref{main-thm} and some known integral formul{\ae} by Selberg, Dotsenko,
	Fateev, Tarasov, Varchenko and Warnaar among others when the parameters take
	special values. The hierarchy of the formul{\ae} treated here is summarized in
	Figure \ref{fig:intdep}.

	For this, we limit ourselves to the special case of Theorem \ref{main-thm}
	with $(\ell, m, x) = (0, 0, {}  1)$, or equivalently, of Proposition \ref{cor:1}
	with $(\ell, m) = (0, 0)$:
	
  \begin{alignat}{1}
        &{\displaystyle\int_{-1}^{1}\int_{-1}^{1}{\left|s-t\right|^{2{}\nu}}{}
        {\left(1-s^2\right)^{\lambda-\frac{1}{2}}}{}
        {\left(1-t^2\right)^{\mu-\frac{1}{2}}}{d}s{d}t}
	\nonumber
	  \\&=
      \frac{{\pi^{\frac{1}{2}}}
      {}{\Gamma\left( \lambda + \frac{1}{2} \right)}
      {}{\Gamma\left( \mu+\frac{1}{2} \right)}
      {}{\Gamma\left( \nu+\frac{1}{2} \right)}
      {}{\Gamma\left( \lambda+\mu+2{}\nu+1 \right)}}
      {{\Gamma\left( \lambda+\nu+1 \right)}{}{\Gamma\left( \mu+\nu+1 \right)}{}{\Gamma\left( \lambda+\mu+\nu+1 \right)}}
      .
\label{eqn:lm0}
	\end{alignat}
 	\begin{example}
  \label{ex:1}(Selberg integral {\cite{Selberg:411367}}) The Selberg integral
  \begin{alignat}{1}
     &\int_0^1 \ldots \displaystyle\int_0^1 \displaystyle\prod_{i = 1}^n t_i^{\alpha - 1} (1 -
    t_i)^{\beta - 1} \left| \displaystyle\prod_{1 \leqslant i < j \leqslant n} (t_i - t_j)
    \right|^{2 \nu} d t_1 \cdots d t_n    \nonumber\\
     &= \displaystyle\prod_{j = 1}^n \frac{\Gamma (\alpha + (j - 1) \nu) \Gamma (\beta +
    (j - 1) \nu) \Gamma (1 + j \nu)}{\Gamma (\alpha + \beta + (n + j - 2) \nu)
    \Gamma (1 + \nu)}
	 \label{eqn:selberg} 
  \end{alignat}
  is a generalization of the Euler beta integral. 
  The special case of Theorem
  \ref{main-thm} with $(\ell,m,x,\mu)=(0,0,{}  1,\lambda)$, namely, 
  \eqref{eqn:lm0}
  with $\lambda=\mu$
  reduces to the
  special case of {\eqref{eqn:selberg}}
  with $(n, \alpha, \beta) = \left( 2, \lambda +
  \frac{1}{2}, \lambda + \frac{1}{2} \right)$, namely,
  
  \begin{equation}
      {\displaystyle\int_{-1}^{1}\int_{-1}^{1}
          {\left(1-s^2\right)^{\lambda-\frac{1}{2}}}{}
          {\left(1-t^2\right)^{\lambda-\frac{1}{2}}}{}
          {\left|s-t\right|^{2\nu}}{d}s{d}t}
    = 
    \frac{{2^{4{}\lambda+2{}\nu}}{}
    {{\Gamma\left( \lambda + \frac{1}{2} \right)}^{2}}}
    {{\Gamma\left( 2{}\lambda + 1 + \nu \right)}}
     \cdot
     \frac{{{\Gamma\left( \lambda + \nu + \frac{1}{2} \right)}^{2}}
     {\Gamma\left( 1 + 2 \nu \right)}}
     {{\Gamma\left( 2{} \lambda + 2{} \nu + 1 \right)}{}{\Gamma\left( 1 +\nu \right)}}
    ,
    \label{eqn:spec_selberg}
  \end{equation}
   after a change of variables $(t_1, t_2) = \left( \frac{1 + s}{2}, \frac{1 +
  t}{2} \right)$. 
\end{example}
 	\begin{example}
  \label{ex:2}(Warnaar integral) 
  The special case of Theorem \ref{main-thm} with $(\ell,
  m, x, \nu) = \left( 0, 0, {}  1, - \frac{\lambda + \mu}{2} \right)$, namely,
  \eqref{eqn:lm0} with
  $\lambda+\mu+2\nu=0$ reduces to a special case of Warnaar's integral formula
  {\cite[(1.4)]{warnaar2010sl3}}
  with $(k_1, k_2, \alpha_1, \beta_1,
  \alpha_2 , \beta_2, \gamma) = \left( 1, 1, \lambda + \frac{1}{2},
  \lambda + \frac{1}{2}, \mu + \frac{1}{2}, \mu + \frac{1}{2} , \lambda +
  \mu \right)$, namely,
  
  \begin{alignat}{2}
&{\left({}\int\int_{{0}\le{s}<{t}\le{1}}+{\frac{{\cos\left(\pi{}\lambda\right)}}{{\cos\left(\pi{}\mu\right)}}}\int\int_{{0}\le{t}<{s}\le{1}}\right){{t^{\lambda-\frac{1}{2}}}{}{\left(1-t\right)^{\lambda-\frac{1}{2}}}{}{s^{\mu-\frac{1}{2}}}{}{\left(1-s\right)^{\mu-\frac{1}{2}}}}{\left|{s}-{t}\right|}^{-\lambda-\mu}{}d{s}d{t}}
\nonumber\\
    &=
    \frac{{\Gamma\left( \lambda + \frac{1}{2} \right)}{}
    {\Gamma\left( \frac{1}{2}-\mu \right)}{}{{\Gamma\left( \mu+\frac{1}{2} \right)}^{2}}}
    {{\Gamma\left( \lambda+1-\mu \right)}{}{\Gamma\left( \mu+1-\lambda \right)}{}{\Gamma\left( \lambda+\mu+1 \right)}}
      .\label{eqn:spec_warnaar}
  \end{alignat}
 \end{example}
 	\begin{example}
  \label{ex:3}
  ($\mathfrak{s}\mathfrak{l}_3$ Selberg integral of Tarasov and
  Varchenko)
  The special case of Theorem \ref{main-thm} with $(\ell,
  m, x, \mu) = \left( 0, 0, {} 1, \frac{1}{2} \right)$, namely, \eqref{eqn:lm0} with $\mu=\frac{1}{2}$
  reduces to a special case of Tarasov--Varchenko's
  integral formula {\cite[(3.4)]{tarasov2003selberg}} with
  $(k_1, k_2, \alpha, \beta_1, \beta_2, \gamma) = \left( 1, 1,
  \lambda + \frac{1}{2}, \lambda + \frac{1}{2}, 1, - 2 \nu \right)$, namely,
  
  \begin{equation}
      {\displaystyle\int_{-1}^{1}\int_{-1}^{1}{\left(1-s^2\right)^{\lambda-\frac{1}{2}}}{}
      {\left( t-s \right)_+^{2\nu}}{d}s{d}t}
    =
    \frac{{2^{2{}\lambda+2{}\nu+1}}{}
    {\Gamma\left( \lambda+\frac{1}{2} \right)}{}{\Gamma\left( \frac{3}{2}+\lambda+2{}\nu \right)}}
    {(1 + 2{}\nu){} {\Gamma\left( 2+2{}\lambda+2{}\nu \right)}}
.\label{eqn:spec_tv}
  \end{equation}
 \end{example}
 	\begin{example}
  \label{ex:4}(Dotsenko--Fateev integral) 
  The special case of Theorem \ref{main-thm} with $(\ell,
  m, x, \nu) = (0, 0, {} 1, - 1)$, namely, \eqref{eqn:lm0} with $\nu=-1$ reduces to a special case of Dotsenko--Fateev's
 integral formula
 {\cite[$(A1)=(A35)$]{dotsenko1985four}} with $(n, m, \alpha, \beta, \rho) =
  \left( 1, 1, \mu - \frac{1}{2}, \mu - \frac{1}{2}, - \frac{\mu -
  \frac{1}{2}}{\lambda - \frac{1}{2}} \right)$, namely,
  
\begin{equation}
{\displaystyle\int_{-1}^{1}\int_{-1}^{1}{\left(1-s^2\right)^{\lambda-\frac{1}{2}}}{}{\left(1-t^2\right)^{\mu-\frac{1}{2}}}{}{\left|s-t\right|^{-2}}{d}s{d}t}=
    \frac{ {2^{2\lambda+2\mu-1}} {}
    {{\Gamma\left( \lambda+\frac{1}{2} \right)}^{2}}{}
    {{\Gamma\left( \mu+\frac{1}{2} \right)}^{2}}}
    {(1-\lambda - \mu ){} {\Gamma\left( 2{}\lambda \right)}{}{\Gamma\left( 2{}\mu \right)}} 
    .
    \label{eqn:spec_df}
  \end{equation}
 \end{example}
 
	The hierarchy of the integral formul{\ae} in Examples \ref{ex:1}--\ref{ex:4}
	and Theorem \ref{main-thm} is summarized as follows:\\
		\begin{figure*}[h]
	\centering
	\begin{tikzpicture}
	\draw[color=black] (0.0,0.0) rectangle (2.0,-0.5);
\node at (1.0,-0.25) {\color{black}{\scriptsize Warnaar}};
\node at (1.0,0.25) {\color{blue}{4}};
\draw[color=black] (2.3,0.0) rectangle (4.5,-0.5);
\node at (3.4,-0.25) {\color{black}{\scriptsize Theorem \ref{main-thm}}};
\draw[color=black] (5.7,0.0) rectangle (8.9,-0.5);
\node at (7.3,-0.25) {\color{black}{\scriptsize Tarasov-Varchenko}};
\draw[color=black] (9.1,0.0) rectangle (12.1,-0.5);
\node at (10.6,-0.25) {\color{black}{\scriptsize Dotsenko-Fateev}};
\draw[color=black] (2.3,-2.0) rectangle (4.5,-2.5);
\node at (3.4,-2.25) {\color{black}{\scriptsize Proposition \ref{cor:1}}};
\draw[color=black] (5.0,-2.0) rectangle (6.7,-2.5);
\node at (5.85,-2.25) {\color{black}{\scriptsize Selberg}};
\draw[color=black] (10.35,-2.00) rectangle (12.05,-2.50);
\node at (11.30,-2.25) {\color{black}{\scriptsize Mehta}};
\node at (12.30, -2.25) {\color{blue}{1}};\draw[->,>=angle 90,color=black] (3.4,-2.5) -- node {\color{black}{\scriptsize $\kern-1.5cm\ell=m=0$}} (3.4,-3.55);
\draw[->,>=angle 90,color=black] (3.40, -.50) -- node {} (3.40, -2.00) ;
\node at (3.40, -1.46) {\color{black}{\scriptsize $\kern-1.0cm x=1$}};
\draw[->,>=angle 90,color=black] (2.0,-0.5) -- node {} (5.0,-2.0) ;
\draw[->,>=angle 90,color=black] (6.18,-0.5) -- node {} (5.85,-2.0) ;
\draw[->,>=angle 90,color=black] (9.1,-0.5) -- node {} (6.615,-2.0) ;\node[draw,black,fill=white,circle,minimum size=0.3cm,inner sep=0pt] at (3.40, -4.00){\eqref{eqn:lm0}};
\node at (2.73, -4.00) {\color{blue}{3}};\draw[color=black] (.15,-6.35) rectangle (1.85,-5.65);
\node at (1.00,-6.00) {\color{black}{\scriptsize Example \ref{ex:2}}};
\node at (1.00,-6.55) {\color{blue}{2}};\draw[color=black] (2.55,-6.35) rectangle (4.25,-5.65);
\node at (3.40,-6.00) {\color{black}{\scriptsize Example \ref{ex:1}}};
\node at (3.40,-6.55) {\color{blue}{2}};\draw[color=black] (5.00,-6.35) rectangle (6.70,-5.65);
\node at (5.85,-6.00) {\color{black}{\scriptsize Example \ref{ex:3}}};
\node at (5.85,-6.55) {\color{blue}{2}};\draw[color=black] (8.05,-6.35) rectangle (9.75,-5.65);
\node at (8.90,-6.00) {\color{black}{\scriptsize Example \ref{ex:4}}};
\node at (8.90,-6.55) {\color{blue}{2}};\draw[color=black] (10.35,-6.35) rectangle (12.05,-5.65);
\node at (11.20,-6.00) {\color{black}{\scriptsize Example \ref{ex:5}}};
\node at (11.20,-6.55) {\color{blue}{1}};\draw[->,>=angle 90,color=black] (11.20, -2.50) -- node {} (11.20, -5.65);
\draw[->,>=angle 90,color=black] (1.00, -.50) -- node {} (1.00, -5.67);
\draw[->,>=angle 90,color=black] (3.02, -4.32) -- node {} (1.43, -5.64);
\draw[->,>=angle 90,color=black] (5.34, -2.50) -- node {} (3.59, -5.65);
\draw[->,>=angle 90,color=black] (3.40, -4.46) -- node {} (3.40, -5.64);
\draw[->,>=angle 90,color=black] (3.77, -4.30) -- node {} (5.38, -5.62);
\draw[->,>=angle 90,color=black] (3.84, -4.16) -- node {} (8.02, -5.68);
\draw[->,>=angle 90,color=black] (3.77, -4.30) -- node {} (5.38, -5.62);
\draw[->,>=angle 90,color=black] (10.30, -.50) -- node {} (9.00, -5.62);
\draw[->,>=angle 90,color=black] (7.62, -.50) -- node {} (5.97, -5.62);
\node[draw,black,fill=white,circle,minimum size=0.3cm,inner sep=0pt] at (8.52, -4.60) {\eqref{eqn:mehSpec}};
\node at (7.82, -4.60) {\color{blue}{1}};
\node at (7.78, -3.70) {\color{black}{\scriptsize$\lambda,\mu\to\infty$}};\draw[->,>=angle 90,color=black] (4.50, -2.50) -- node {} (8.12, -4.39);
\draw[->,>=angle 90,color=black] (8.96, -4.83) -- node {} (10.53, -5.65);
\node at (7.3,0.25) {\color{black}{\color{blue}{4}}};
\node at (10.6,0.25) {\color{black}{\color{blue}{3}}};
\node at (1.9,-2.25) {\color{black}{\color{blue}{3}}};
\node at (6.9,-2.25) {\color{black}{\color{blue}{3}}};
\node at (3.4,0.25) {\color{blue}{4}};
 				\end{tikzpicture}
		\caption{
	Specialization of Theorem \ref{main-thm}
	and related results.
    Blue numbers outside boxes denote the number
	of independent continuous parameters.
		}
				\label{fig:intdep}
			\end{figure*}
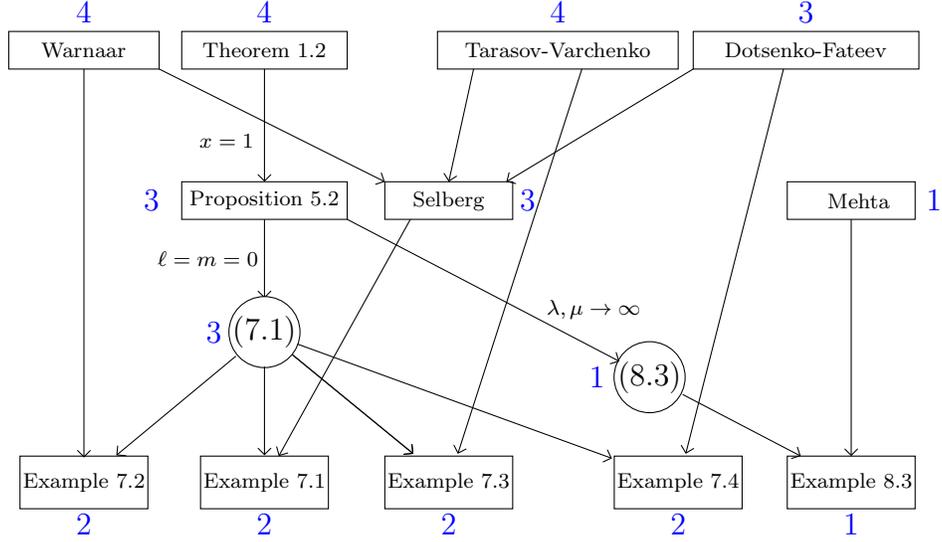

  	\section{Limiting case}\label{sec:limit}
In this section we discuss the limiting case of our integral formula.
Taking the limit in \eqref{eqn:s+t}
as both $\lambda$ and
$\mu$ tend to be
zero, we obtain
\begin{corollary}
	\label{cor:170599}For $\rho \in \mathbb{C}$ with ${\operatorname{Re}\rho} > 0$ and
  $\gamma \in \{ 0, 1 \}$,
  
  \begin{alignat}{1}
 &{\left|{\cos {\varphi}}+{\cos {\psi}}\right|^{\rho}{\operatorname{sgn}}^{\gamma}\left({\cos {\varphi}}+{\cos {\psi}}\right)}
 \nonumber
\\&=
{2^{-\rho}}{}
{{\Gamma\left( \rho+1 \right)}^{2}}{}
{\sum_{\mbox{\scalebox{0.7}        {$\begin{array}{c}\ell,m\in\mathbb{Z}\\\ell\equiv{m}+\gamma{\,\operatorname{mod}\,2}        \end{array}$}}}}
{\frac{{\cos {\ell{}\varphi}}{}{\cos {m{}\psi}}}
{{\prod_{\delta,\varepsilon\in{{\{\pm1\}}}}}
{{\Gamma\left( 1+\frac{1}{2}{}(\rho+\delta{}\ell+\varepsilon{}{m}) \right)}}}}
.\end{alignat}
 \end{corollary}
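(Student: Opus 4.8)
The plan is to obtain Corollary \ref{cor:170599} as the $\lambda,\mu\to 0^{+}$ limit of Theorem \ref{thm:1-1}, and then to remove the auxiliary restriction $\operatorname{Re}\rho>4$ produced by that limit via analytic continuation in $\rho$. Put $\rho=2\nu$ and $\gamma=\varepsilon$, and fix $\nu$ with $\operatorname{Re}\nu>2$ and $\nu\notin\tfrac12\mathbb{Z}$, so that for all sufficiently small $\lambda,\mu>0$ the hypothesis $2\operatorname{Re}\nu>\lambda+\mu+4$ of Theorem \ref{thm:1-1} holds. Substituting $s=\cos\varphi$, $t=-\cos\psi$ into \eqref{eqn:s+t} and using $C^{\mu}_{m}(-\cos\psi)=(-1)^{m}C^{\mu}_{m}(\cos\psi)$ turns the expansion into
\[
|\cos\varphi+\cos\psi|^{2\nu}\operatorname{sgn}^{\varepsilon}(\cos\varphi+\cos\psi)=\sum_{\ell+m\equiv\varepsilon\,(\operatorname{mod}\,2)}(-1)^{m}b^{\lambda,\mu,\nu}_{\ell,m}\,C^{\lambda}_{\ell}(\cos\varphi)\,C^{\mu}_{m}(\cos\psi),
\]
in which the $(-1)^{m}$ cancels the one in \eqref{eqn:alm}, so that $(-1)^{m}b^{\lambda,\mu,\nu}_{\ell,m}$ is symmetric under $(\lambda,\ell)\leftrightarrow(\mu,m)$.

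Next I would regroup the general term. Writing $(\lambda+\ell)\Gamma(\lambda)=\Gamma(\lambda+1)\,\tfrac{\lambda+\ell}{\lambda}$ and similarly in $\mu$, the $(\ell,m)$-term equals the product of
\[
\frac{\Gamma(\lambda+1)\,\Gamma(\mu+1)\,\Gamma(\lambda+\mu+2\nu+1)\,\Gamma(2\nu+1)}{2^{2\nu}\,\prod_{\delta,\varepsilon\in\{\pm1\}}\Gamma\!\left(\nu+1+\tfrac{\lambda+\mu}{2}+\delta\tfrac{\lambda+\ell}{2}+\varepsilon\tfrac{\mu+m}{2}\right)}
\]
and of $\tfrac{\lambda+\ell}{\lambda}C^{\lambda}_{\ell}(\cos\varphi)\cdot\tfrac{\mu+m}{\mu}C^{\mu}_{m}(\cos\psi)$. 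The classical Gegenbauer--Chebyshev limit, read off from $(1-2xr+r^{2})^{-\lambda}=1+\lambda\sum_{n\ge1}\tfrac{2\cos n\varphi}{n}r^{n}+O(\lambda^{2})$ with $x=\cos\varphi$, gives uniformly on $[-1,1]$ that $\tfrac{\lambda+\ell}{\lambda}C^{\lambda}_{\ell}(\cos\varphi)\to\kappa_{\ell}\cos(\ell\varphi)$ as $\lambda\to 0^{+}$, where $\kappa_{0}=1$ and $\kappa_{\ell}=2$ for $\ell\ge1$. Combined with $\Gamma(\lambda+1),\Gamma(\mu+1)\to1$ and the obvious limits of the remaining Gamma factors, the $(\ell,m)$-term tends to $\dfrac{\Gamma(2\nu+1)^{2}\,\kappa_{\ell}\kappa_{m}\cos(\ell\varphi)\cos(m\psi)}{2^{2\nu}\prod_{\delta,\varepsilon\in\{\pm1\}}\Gamma\!\left(1+\tfrac12(2\nu+\delta\ell+\varepsilon m)\right)}$. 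Summing over $\ell,m\in\mathbb{N}$ with $\ell+m\equiv\varepsilon$, and observing that the limiting summand stripped of $\kappa_{\ell}\kappa_{m}$ is even in $\ell$ and in $m$ — so that $\sum_{\ell,m\in\mathbb{N}}\kappa_{\ell}\kappa_{m}(\cdots)=\sum_{\ell,m\in\mathbb{Z}}(\cdots)$, while the parity condition becomes $\ell\equiv m+\gamma\ (\operatorname{mod}\,2)$ — reproduces exactly the right-hand side of Corollary \ref{cor:170599} with $\rho=2\nu$, valid so far for $\operatorname{Re}\rho>4$.

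The substantive point is to justify exchanging $\lim_{\lambda,\mu\to0^{+}}$ with the infinite summation, which I would do by dominated convergence for series. From \eqref{eqn:Cnbdd} one has $|C^{\lambda}_{\ell}(\cos\varphi)|\le C^{\lambda}_{\ell}(1)=\tfrac{\Gamma(\ell+2\lambda)}{\ell!\,\Gamma(2\lambda)}$, and since $(2\lambda)_{\ell}$ is increasing in $\lambda\ge0$ and $\tfrac{1}{\lambda\Gamma(2\lambda)}=\tfrac{2}{\Gamma(2\lambda+1)}$ is bounded, the factor $\tfrac{\lambda+\ell}{\lambda}C^{\lambda}_{\ell}(1)$ is $\le C\ell^{2\delta}$ uniformly for $\lambda\in(0,\delta]$, and likewise in $m$. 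Grouping the denominator product as $\bigl[\Gamma(\nu+1+\lambda+\mu+\tfrac{\ell+m}{2})\,\Gamma(\nu+1-\tfrac{\ell+m}{2})\bigr]\bigl[\Gamma(\nu+1+\lambda+\tfrac{\ell-m}{2})\,\Gamma(\nu+1+\mu+\tfrac{m-\ell}{2})\bigr]$ and applying the reflection formula to the two factors whose argument is large and negative — this is where $\nu\notin\tfrac12\mathbb{Z}$ enters, so the emerging $\sin$ factors stay bounded away from $0$ — one gets $\bigl|\prod_{\delta,\varepsilon}\Gamma(\cdots)\bigr|\ge c\,(\ell+m)^{2\operatorname{Re}\nu+1}\max(1,|\ell-m|)^{2\operatorname{Re}\nu+1}$ with $c>0$ uniform in small $\lambda,\mu$. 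Hence the general term is dominated by $C\,\ell^{2\delta}m^{2\delta}(\ell+m)^{-2\operatorname{Re}\nu-1}\max(1,|\ell-m|)^{-2\operatorname{Re}\nu-1}$, a summable bound once $\operatorname{Re}\nu$ is large, which legitimizes the exchange and yields Corollary \ref{cor:170599} for $\operatorname{Re}\rho\gg 0$ and $\rho\notin\mathbb{Z}$.

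Finally I would pass to $\operatorname{Re}\rho>0$ by analytic continuation in $\rho$: the same Gamma estimate shows the series on the right of Corollary \ref{cor:170599} converges locally uniformly on $\{\operatorname{Re}\rho>0\}$ and hence is holomorphic in $\rho$ there, while for each fixed $(\varphi,\psi)$ the left-hand side is holomorphic in $\rho$ off the measure-zero set $\{\cos\varphi+\cos\psi=0\}$ and continuous in $(\varphi,\psi)$ whenever $\operatorname{Re}\rho>0$ (for $\gamma=1$ too, since $|y|^{\rho}\operatorname{sgn}(y)\to0$ as $y\to0$). By uniqueness of analytic continuation the identity extends from $\{\operatorname{Re}\rho>4,\ \rho\notin\mathbb{Z}\}$ to $\{\operatorname{Re}\rho>0\}$, proving the corollary. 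I expect the main obstacle to be precisely the uniform-in-$(\lambda,\mu)$ lower bound for the product of four Gamma functions — the careful bookkeeping with the reflection formula — whereas the Chebyshev limit, the $\mathbb{N}\to\mathbb{Z}$ rewriting, and the continuation step are routine.
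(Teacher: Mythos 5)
Your proposal is correct and follows essentially the same route as the paper, which obtains Corollary \ref{cor:170599} precisely by letting $\lambda,\mu\to0$ in \eqref{eqn:s+t}; your Tannery-type domination of the series and the analytic continuation in $\rho$ from $\operatorname{Re}\rho>4$ down to $\operatorname{Re}\rho>0$ simply make explicit what the paper leaves implicit. The only cosmetic point is that near integer $\rho$ your reflection-formula lower bound on the Gamma product degenerates, but for holomorphy of the right-hand side one only needs the upper bound $\left|1/\Gamma\!\left(1+\tfrac{1}{2}(\rho-u)\right)\right|\le\pi^{-1}\cosh\!\left(\tfrac{\pi}{2}\operatorname{Im}\rho\right)\left|\Gamma\!\left(\tfrac{1}{2}(u-\rho)\right)\right|$, which is uniform on compacta.
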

On the other hand, taking the limit in \eqref{eqn:cor:1} with $\varepsilon=1$ as $\lambda$ tends to infinity,
we can deduce the following integral formula of the Hermite polynomial $H_n(x)$ from
Proposition \ref{cor:1}:
\begin{corollary}
  \label{cor:Hermite}Suppose $\ell, m \in \mathbb{N}$ with $\ell + m$
  even.
  
  \begin{alignat}{1}
&{\displaystyle\int_{-\infty}^{\infty}\int_{-\infty}^{\infty}{\left|s-x{}{t}\right|^{2{}\nu}}{}{e^{-s^2-t^2}}{}{H_{\ell}\left(s\right)}{}{H_{m}\left(t\right)}{d}s{d}t}
\nonumber
    \\&=
    {\left(-\nu\right)_{\frac{\ell+m}{2}}}{}
    {\left(-1\right)^{\frac{\ell-m}{2}}}{}
    {2^{\ell+m}}{}
    {\pi^{\frac{1}{2}}}{}
    {\Gamma\left( \frac{1}{2}+\nu \right)}{}
    {\left(x^2+1\right)^{\nu-\frac{\ell+m}{2}}}{}
    {x^m}
    .
    \label{eqn:corHermite}
\end{alignat}
     \label{eqn:cor:Hermite}
\end{corollary}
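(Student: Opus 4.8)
The plan is to derive Corollary~\ref{cor:Hermite} as a scaling limit of the Gegenbauer identity \eqref{eqn:cor:1}, exploiting the classical degeneration of ultraspherical polynomials to Hermite polynomials, $\lim_{\lambda\to\infty}\lambda^{-n/2}C_n^\lambda(\lambda^{-1/2}z)=H_n(z)/n!$, together with $(1-z^2/\lambda)^{\lambda-\frac12}\to e^{-z^2}$. I would take the $\varepsilon=0$ instance of \eqref{eqn:cor:1}, for which, when $\ell+m$ is even, the right-hand side is $b^{\lambda,\mu,\nu}_{\ell,m}\,v_\ell^\lambda v_m^\mu$, and I would \emph{couple the two ultraspherical parameters} by setting $\mu=\lambda/x^2$ for a fixed $x>0$. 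Substituting $s=\sigma/\sqrt\lambda$, $t=\tau/\sqrt\mu$ one gets $s-t=\lambda^{-1/2}(\sigma-x\tau)$, hence $|s-t|^{2\nu}=\lambda^{-\nu}|\sigma-x\tau|^{2\nu}$; the Jacobian is $(\lambda\mu)^{-1/2}=x/\lambda$, the weight becomes $(1-\sigma^2/\lambda)^{\lambda-\frac12}(1-\tau^2/\mu)^{\mu-\frac12}$, and after pulling the normalizing factors $\lambda^{\ell/2}$ and $\mu^{m/2}=\lambda^{m/2}x^{-m}$ out of $C_\ell^\lambda(\sigma/\sqrt\lambda)$ and $C_m^\mu(\tau/\sqrt\mu)$, the left-hand side of \eqref{eqn:cor:1} equals $\lambda^{\frac{\ell+m}{2}-1-\nu}\,x^{1-m}(\ell!\,m!)^{-1}$ times an integral that, by dominated convergence, tends as $\lambda\to\infty$ to $\int_{\mathbb{R}^2}|\sigma-x\tau|^{2\nu}e^{-\sigma^2-\tau^2}H_\ell(\sigma)H_m(\tau)\,d\sigma\,d\tau$, that is, to the left-hand side of \eqref{eqn:corHermite}.

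On the other side I would compute the asymptotics of $b^{\lambda,\mu,\nu}_{\ell,m}v_\ell^\lambda v_m^\mu$ under $\lambda\to\infty$, $\mu=\lambda/x^2$. Feeding \eqref{eqn:alm} and \eqref{eqn:vnlmd} into this product, the numerator contains $\Gamma(\ell+2\lambda)$ and $\Gamma(m+2\mu)$, which I would rewrite by the Legendre duplication formula; after that, the factors $\Gamma(\lambda)$ and $\Gamma(\mu)$ cancel and, using $\Gamma(z+a)/\Gamma(z+b)\sim z^{a-b}$, one obtains the asymptotic relation
\[
   b^{\lambda,\mu,\nu}_{\ell,m}v_\ell^\lambda v_m^\mu\;\sim\;\frac{(-1)^m\,2^{\ell+m-2\nu}\,\pi\,\Gamma(2\nu+1)}{\ell!\,m!\;\Gamma\!\bigl(\nu-\tfrac{\ell+m}{2}+1\bigr)}\;(x^2+1)^{\nu-\frac{\ell+m}{2}}\,x\;\lambda^{\frac{\ell+m}{2}-1-\nu},
\]
which carries exactly the same power of $\lambda$ and the same factor $(\ell!\,m!)^{-1}$ as the left-hand side. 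Cancelling these common factors leaves $\int_{\mathbb{R}^2}|\sigma-x\tau|^{2\nu}e^{-\sigma^2-\tau^2}H_\ell H_m$ equal to $(-1)^m2^{\ell+m-2\nu}\pi\,\Gamma(2\nu+1)\,(x^2+1)^{\nu-\frac{\ell+m}{2}}x^m/\Gamma\!\bigl(\nu-\tfrac{\ell+m}{2}+1\bigr)$; applying the duplication formula once more in the form $\sqrt\pi\,\Gamma(2\nu+1)=2^{2\nu}\Gamma(\nu+\tfrac12)\Gamma(\nu+1)$, the identity $(-\nu)_{\frac{\ell+m}{2}}=(-1)^{\frac{\ell+m}{2}}\Gamma(\nu+1)/\Gamma\!\bigl(\nu-\tfrac{\ell+m}{2}+1\bigr)$, and the parity relation $(-1)^{\frac{\ell+m}{2}+\frac{\ell-m}{2}}=(-1)^\ell=(-1)^m$ (valid since $\ell\equiv m\bmod 2$), this collapses to the right-hand side of \eqref{eqn:corHermite}. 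This proves the formula for every $x>0$; the range $x\le 0$ then follows either by continuity in $x$ or from the substitution $t\mapsto -t$ together with $H_m(-t)=(-1)^mH_m(t)$.

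The delicate point, and the one I would write out in full, is the interchange of limit and integration. For $\lambda\ge1$ and $|\sigma|<\sqrt\lambda$ one has $(1-\sigma^2/\lambda)^{\lambda-\frac12}\le e^{-\sigma^2/2}$, while the explicit power series of $C_n^\lambda$ combined with $(\lambda)_{n-k}/\lambda^{n-k}\le n!$ gives $\bigl|\lambda^{-n/2}C_n^\lambda(\sigma/\sqrt\lambda)\bigr|\le P_n(|\sigma|)$ for a fixed polynomial $P_n$; hence the rescaled integrand is dominated, uniformly in $\lambda\ge1$, by $|\sigma-x\tau|^{2\operatorname{Re}\nu}e^{-(\sigma^2+\tau^2)/2}P_\ell(|\sigma|)P_m(|\tau|)$, which lies in $L^1(\mathbb{R}^2)$ as soon as $2\operatorname{Re}\nu>-1$ (the singular locus $\sigma=x\tau$ being a line). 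Dominated convergence, together with the pointwise limits above and $\mathbf{1}_{|\sigma|<\sqrt\lambda}\mathbf{1}_{|\tau|<\sqrt\mu}\to1$, then justifies all the limits taken. Since \eqref{eqn:cor:1} is available for $\operatorname{Re}\nu>0$, the argument first yields \eqref{eqn:corHermite} on that half-plane; both sides being holomorphic in $\nu$ on $\operatorname{Re}\nu>-\frac12$ — the left by absolute and locally uniform convergence of the Gaussian integral, the right because $(-\nu)_{\frac{\ell+m}{2}}$ is entire and $\Gamma(\nu+\frac12)$ is regular there — the identity extends to the whole admissible range by analytic continuation.
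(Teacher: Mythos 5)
Your proposal is correct and takes essentially the same route as the paper: the paper deduces Corollary \ref{cor:Hermite} from Proposition \ref{cor:1} by invoking the limit $H_n(z)=n!\lim_{\lambda\to\infty}\lambda^{-n/2}C_n^{\lambda}(z/\sqrt{\lambda})$ as a one-line hint, and you carry out exactly this degeneration in full detail (the coupling $\mu=\lambda/x^2$ producing the parameter $x$, the Gamma-function asymptotics of $b^{\lambda,\mu,\nu}_{\ell,m}v_\ell^\lambda v_m^\mu$, the dominated-convergence justification, and the analytic continuation in $\nu$). Your computations check out, so this is a correct, fully worked-out version of the paper's argument.
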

\begin{proof}
	Use the limit formula
	\begin{equation*}
H_n (x) = n! \displaystyle\lim_{\lambda \rightarrow \infty} \lambda^{- \frac{n}{2}}
  C_n^{\lambda} \left( \displaystyle\frac{x}{\sqrt[]{\lambda}} \right).
	\end{equation*}
\end{proof}
\begin{example}
	\label{ex:5}
  (Mehta integral {\cite{mehta2004random}}) The Mehta integral
  \begin{equation*}
	  \frac{1}{(2 \pi)^{\frac{n}{2}}}
	  \displaystyle\int_{\mathbb{R}^n} \displaystyle\prod_{i = 1}^n e^{-\frac{1}{2}t_i^2} 
\displaystyle\prod_{1 \leqslant i < j \leqslant n} | t_i - t_j |^{2 \nu}
    d t_1 \cdots d t_n
     = \displaystyle\prod_{j = 1}^n \frac{\Gamma (1 + j \nu)}{\Gamma (1 + \nu)}
  \end{equation*}
  in the special case $n = 2$ implies the following equation
  
  \begin{equation}
      \frac{1}{2{}\pi}
      {\displaystyle\int_{-\infty}^{\infty}\int_{-\infty}^{\infty}{e^{-\frac{1}{2}(s^2+t^2)
      }}{}{\left|s-t\right|^{2{}\nu}}{d}s{d}t}=
    \frac{{\Gamma\left( 1+2{}\nu \right)}}{{\Gamma\left( 1+\nu \right)}} 
	  .\label{eqn:mehSpec}
  \end{equation}

   This coincides with the special case of Corollary \ref{cor:Hermite} with
  $(\ell, m, x) = (0, 0, 1)$.
\end{example}

\section*{Acknowledgement(s)}

The first author was partially supported by the Grant-in-Aid for Scientific Research (A) 
18H03669.

\nocite{gelfand1964}
\nocite{clerc2011}
\nocite{cohl2011}
\nocite{szmytkowski2011}

\end{document}